%-----------------------------------------------------------------------
% Beginning of amsart.template
%-----------------------------------------------------------------------
%
%     AMS-LaTeX v. two template for use with amsart
%
%     Remove any commented or uncommented macros you do not use.

\documentclass[12pt]{amsart}
\usepackage{mathtools}
\usepackage{amsmath,amsthm, amsfonts,amssymb}
\usepackage[english]{babel}
\usepackage[pagewise]{lineno}%\linenumbers
\usepackage{appendix} % For Control of Appendix Numbering & Location
\usepackage{cancel}
\usepackage[all]{xy}
\usepackage[colorinlistoftodos,shadow]{todonotes}
\usepackage{setspace}
\usepackage[utf8]{inputenc}

\usepackage{tikz}
\usetikzlibrary{mindmap,trees}
\usepackage{verbatim}
\usepackage{nag}

% Paquetes para referencias, incluyendo DOIs
\usepackage{hyperref}
\usepackage{doi}

% Configuración de página y encabezados
\usepackage{fancyhdr}
\pagestyle{fancy}
\fancyhf{} % clear all header and footer fields
\fancyhead[RO,LE]{\thepage}
\fancyhead[LO]{\leftmark}
\fancyhead[RE]{\rightmark}

% Configuración de hipervínculos
\hypersetup{
    colorlinks=true,
    linkcolor=blue,
    filecolor=magenta,      
    urlcolor=cyan,
    citecolor=green,
}

% Para una mejor tipografía
\usepackage{lmodern}
\usepackage{microtype}

\usepackage{academicons}
\definecolor{orcidlogocol}{HTML}{A6CE39}

    \usepackage[numbers,square]{natbib}
%  \bibpunct{[}{]}{;}{n}{,}{,} % The command requires six mandatory parameters.
% The symbol for the opening bracket.
% The symbol for the closing bracket.
% The symbol that appears between multiple citations.
% This argument takes a letter:
% n - numerical style.
% s - numerical superscript style.
% any other letter - author-year style.
% The punctuation to appear between the author and the year (in parenthetical case only).
% The punctuation used between years in multiple citations when there is a common author. e.g., (Chomsky 1956, 1957). If you want an extra space, then you need {,~}.

% Cargar el paquete hyperref
\setlength{\headheight}{14.0pt}
\addtolength{\topmargin}{-6.0pt}

\hypersetup{
    colorlinks=true,    % Colorea los enlaces en lugar de usar bordes
    linkcolor=blue,     % Color de los enlaces internos
    citecolor=green,    % Color de las citas
    urlcolor=blue       % Color de los enlaces URL
}

\usepackage{fancyhdr}

% Custom header settings
\pagestyle{fancy}
\fancyhf{} % clear all header and footer fields
 % no line in header area
\fancyhead[RO,LE]{\thepage} % Page number (RO=right odd, LE=left even)
\fancyhead[LO]{Gluing Dynamics: $\varepsilon$-precision}%{Non-Archimedean surgery} % short title on the left for odd pages
\fancyhead[RE]{Nopal-Coello \and Pérez-Buendía} % short title on the right for even pages

% Add this to use amsart theorem styles along with fancyhdr
\fancypagestyle{plain}{%
  \fancyhead{}
   % remove lines as well
}

\newcommand{\bb}[1]{\mathbb{#1}}

\newcommand{\C}{\bb C}

 %the absolute galois group of K
 %iner product

 %tensor product

% overline
% un
% etale cohomology grupp
% crystalline cohomology grupp
% etale cohomology grupp
% Just the cohomology grup 

\mathchardef\mslash="202F %mathematical shalsh insteed of traditional slash

\DeclareMathOperator{\wdeg}{\normalfont wdeg}

% \input{enviroments}
% DEFINE SOME USEFUL THEOREMS, ENVIRONMENTS, ETC.:
\newtheorem{Theorem}{Theorem}[section]
\newtheorem*{Theorem*}{Theorem}
\newtheorem{Proposition}[Theorem]{Proposition}
\newtheorem{Lemma}[Theorem]{Lemma}
\newtheorem{Corollary}[Theorem]
{Corollary}
\newtheorem*{Corollary*}{Corollary}
\theoremstyle{definition}
    \newtheorem{Definition}[Theorem]{Definition}
    \newtheorem{Example}[Theorem]{Example}

\theoremstyle{remark}
    \newtheorem{Remark}[Theorem]{Remark}

% ==================
% = Begin Document =
% ==================

\begin{document}

%\linenumbers

\title{Gluing Dynamics: $\varepsilon$-Precision in Solving a Non-Archimedean Inverse Problem}

\author{Víctor Nopal-Coello}
\address{Víctor Nopal-Coello, Centro de Investigación en Matemáticas - Unidad Mérida\\ Yucatán, México \\  orcid:0000-0003-2608-3636}
\email{victor.nopal@cimat.mx}

\author{J. Rogelio Pérez-Buendía*}
\address{* Corresponding author: J. Rogelio Pérez-Buendía, CONAHCYT-CIMAT Mérida\\ México \\  orcid:0000-0002-7739-4779}
\email{rogelio.perez@cimat.mx}

\thanks{This research was supported by the project \textit{"Modelos matemáticos y computacionales no convencionales para el estudio y análisis de problemas relevantes en Biología"}, funded by \textit{"CONAHCYT"}, under Grant No. \textit{"CF-2019-217367"}.}

\subjclass[2020]{Primary 11S82,37P05, 37P20}
\keywords{Dynamical Systems, Non-Archimedean Dynamics, Local Dynamics, Inverse Problems}

\date{\today}

% \dedicatory{}

\begin{abstract}
 This research proposes a new method for approximating the solution of the inverse problem of finding a rational function that generates known local dynamics within distinct, disjoint closed balls in non-Archimedean fields. Although our approach is not directly influenced by Runge's theorem for approximating analytic maps in complex settings, it shares similarities by adapting these ideas to the non-Archimedean context. We aim to connect local dynamic behaviors, similar to dynamic surgery, without using quasiconformal but rational mappings. Our main theorem and corollaries present an algorithmic technique to construct a rational function, denoted as \(F_\varepsilon\), that synthesizes specified local dynamics with an \(\varepsilon\)-precision parameter globally.
\end{abstract}

\maketitle

\tableofcontents

% \input{notation}
% \section{Notation}

\newpage
\section{Notation}

Throughout this paper, we adopt the following notation and conventions:

\begin{itemize}
    \item $\mathbb{C}_v$: a complete and algebraically closed non-Archimedean field associated with a non-Archimedean valuation $v$.
    \item $|\mathbb{C}_v^\times|$: the valuation group of $\mathbb{C}_v$.
    \item $\mathbb{P}(\mathbb{C}_v)$: the projective space over $\mathbb{C}_v$.
    \item $|\cdot|_v$: the absolute value associated with the valuation $v$ on $\mathbb{C}_v$. This is often denoted simply as $|\cdot|$ when the context is clear.
    \item $B_r(a) := \{x \in \mathbb{C}_v : |x-a| \leq r\}$: the closed disk, or ball, in $\mathbb{C}_v$ of radius $r$ centered at $a$. A ball is termed 'rational' if $r \in |\mathbb{C}_v^\times|$, and ``irrational'' otherwise.
    \item $D_r(a) := \{x \in \mathbb{C}_v : |x-a| < r\}$: the open disk, or simply disk, in $\mathbb{C}_v$ of radius $r$ centered at $a$. A disk is 'rational' if $r \in |\mathbb{C}_v^\times|$, and 'irrational' otherwise.
    \item Open $\mathbb{P}(\mathbb{C}_v)$-disk: either a disk $D_r(a)$ or the complement $\mathbb{P}(\mathbb{C}_v) \setminus B_r(a)$ of a ball.
    \item Closed $\mathbb{P}(\mathbb{C}_v)$-disk: either a ball $B_r(a)$ or the complement $\mathbb{P}(\mathbb{C}_v) \setminus D_r(a)$ of a disk.
    \item $f : \mathbb{P}(\mathbb{C}_v) \to \mathbb{P}(\mathbb{C}_v)$: a rational function from $\mathbb{P}(\mathbb{C}_v)$ to itself, representing the dynamical system.
\end{itemize}

Please note:

\begin{itemize}
    \item Unless specified otherwise, all functions are considered over $\mathbb{C}_v$.
    \item The term ``rational function" refers to a map of the form $f(x) = P(x)/Q(x)$, where $P$ and $Q$ are polynomials over $\mathbb{C}_v$, with $Q(x) \neq 0$.
    \item All absolute values and distances mentioned are non-Archimedean unless specified otherwise.
\end{itemize}

\section{Introduction}
\label{sec:Introduction}

Let \( \mathbb{C}_v \) be a complete, algebraically closed non-Archimedean field. In this setting, we explore the discrete dynamical systems formed by iterating rational functions \( f \in \mathbb{C}_v(z) \) on the projective line \( \mathbb{P}(\mathbb{C}_v) \).

Arithmetic dynamics traditionally pivots around two central questions. The Direct Problem, thoroughly investigated in works by Silverman \cite{silverman2007arithmetic}, Benedetto~\cite{benedetto2019dynamics}, Nopal-Coello~\cite{MR4510115}, Kiwi~\cite{MR3265299}, Hsia~\cite{hsia1996weak}, Rivera-Letelier \cite{rivera2003dynamique}, and others, concerns the exhaustive analysis of the dynamics generated by a rational function. Conversely, the Inverse Problem, which entails reconstructing a global function informed by the dynamics localized within distinct non-overlapping rational balls, has yet to be as extensively explored; this is especially the case within non-Archimedean fields, a gap in the literature our research seeks to address.

This work addresses a variant of the inverse problem in arithmetic dynamics by approximating the desired global dynamics with rational functions that exhibit specified behaviors within certain closed balls. While not solving the inverse problem directly, akin to how Runge's theorem approximates analytic functions with rational functions in complex analysis (see, for example, \cite{gamelin2003complex}), our approach constructs rational approximations to achieve an \(\varepsilon\)-level precision to the solution of the inverse problem.

Our methodology, termed \(\varepsilon\)-approximation, also echoes dynamic surgery techniques used in complex dynamics (see, for example \cite{shishikura1987quasiconformal}), especially in combining local dynamics to form a coherent global system. However, it diverges significantly: it does not employ quasiconformal mappings or complex analytical methods. Instead, it adapts these conceptual strategies to fit the unique framework of non-Archimedean fields, focusing on synthesizing a global dynamic from localized observations.

The primary contribution of this paper is a systematic method to construct a rational function \( F_\varepsilon \) that closely approximates the given functions defined on disjoint balls with an \(\varepsilon\)-precision. Corollaries to our main results further demonstrate that the dynamics of \( F_\varepsilon \) within these balls are akin to those defined by the given locally defined rational functions, illustrating the efficacy of our approach in replicating dynamic behaviors.

Our main theorem is the following:

\begin{Theorem*}
Let \( f_1, \ldots, f_n \in \mathbb{C}_v(z) \) be rational functions, and let \\
\( B_{r_1}(a_1), \ldots, B_{r_n}(a_n) \) be rational balls disjoint by pairs. Assume the following:
\begin{enumerate}
    \item \( f_i(B_{r_i}(a_i)) = B_{t_i}(b_i) \) for each \( i \),
    \item If \( B = \bigcup_{i=1}^n B_{r_i}(a_i) \), then \( f_i(B) \subset B_1(0) \) for all \( i = 1, \ldots, n \).
\end{enumerate}
Then, for any \( \varepsilon > 0 \), there exists a rational function \( F_\varepsilon \in \mathbb{C}_v(z) \) such that \( F_\varepsilon(B_{r_i}(a_i)) = B_{t_i}(b_i) \) for each \( i = 1, \ldots, n \). Additionally, we have
\[
|F_\varepsilon(z) - f_i(z)| < \varepsilon
\]
for all \( z \in B_{r_i}(a_i) \) and for all \( i = 1, \ldots, n \).
\end{Theorem*}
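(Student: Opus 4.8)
The plan is to manufacture a rational analogue of a partition of unity subordinate to the balls $B_{r_i}(a_i)$ and then take $F_\varepsilon$ to be the corresponding combination $\sum_{i=1}^n f_i E_i$. First I would record the elementary ultrametric fact that, since the closed balls $B_{r_i}(a_i)$ are pairwise disjoint, $d_{ij} := |a_i - a_j|$ satisfies $d_{ij} > \max(r_i,r_j)$ for $i \neq j$; hence for $z \in B_{r_i}(a_i)$ one has $|z-a_i| \leq r_i$ while $|z - a_\ell| = d_{i\ell}$ for every $\ell \neq i$. For a parameter $N \in \mathbb{N}$ to be fixed later, I set
\[
E_i^{(N)}(z) \;:=\; \frac{\prod_{\ell \neq i}(z-a_\ell)^N}{\sum_{k=1}^n \prod_{\ell \neq k}(z-a_\ell)^N}, \qquad i = 1,\dots,n .
\]
The common denominator is not identically zero (it does not vanish at $z = a_1$), so these are genuine rational functions, and $\sum_{i=1}^n E_i^{(N)} \equiv 1$. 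Then $F_\varepsilon := \sum_{i=1}^n f_i E_i^{(N)}$ is again rational.

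The technical core is a routine ultrametric estimate: for $z \in B_{r_i}(a_i)$ the term $k=i$ strictly dominates every other term of the denominator, so the denominator has absolute value $\prod_{\ell \neq i} d_{i\ell}^N \neq 0$; this shows each $E_j^{(N)}$ is pole-free on every $B_{r_i}(a_i)$ and yields the bound $\bigl|E_j^{(N)}(z)\bigr| \leq (r_i/d_{ij})^N$ for $z \in B_{r_i}(a_i)$ and $j \neq i$. Hypothesis (2) gives $|f_j(z)| \leq 1$ on $B \supseteq B_{r_i}(a_i)$, so each $f_j$, hence $F_\varepsilon$, is pole-free on each $B_{r_i}(a_i)$. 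Writing $F_\varepsilon - f_i = \sum_{j \neq i}(f_j - f_i)E_j^{(N)}$ on $B_{r_i}(a_i)$ (valid because $\sum_j E_j^{(N)} \equiv 1$) and using $|f_j - f_i| \leq \max(|f_j|,|f_i|) \leq 1$ there, the ultrametric inequality gives
\[
|F_\varepsilon(z) - f_i(z)| \;\leq\; \max_{j \neq i}\left(\tfrac{r_i}{d_{ij}}\right)^{\! N} \qquad \text{for all } z \in B_{r_i}(a_i).
\]
Since $r_i < d_{ij}$, I then choose $N$ large enough that $\max_i \max_{j\neq i}(r_i/d_{ij})^N < \min\bigl(\varepsilon,\ \min_i t_i\bigr)$ simultaneously for all $i$; this at once delivers the approximation inequality $|F_\varepsilon - f_i| < \varepsilon$ on $B_{r_i}(a_i)$.

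I expect the main obstacle to be upgrading this sup-norm estimate to the exact identity $F_\varepsilon(B_{r_i}(a_i)) = B_{t_i}(b_i)$, as this is the one place the argument leaves pure ultrametric bookkeeping for non-Archimedean function theory. Both $f_i$ and $F_\varepsilon$ are analytic on the rational ball $B_{r_i}(a_i)$; expanding $f_i(z) = \sum_{k\geq 0} c_k(z-a_i)^k$ and $F_\varepsilon(z) = \sum_{k\geq 0} c'_k(z-a_i)^k$, the sup norm on $B_{r_i}(a_i)$ equals the Gauss norm of the coefficients, so $\sup_k |c_k - c'_k|\, r_i^k = \sup_{z\in B_{r_i}(a_i)}|F_\varepsilon(z) - f_i(z)| < \varepsilon$. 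By the standard description of the image of a closed ball under an analytic map (using that $\mathbb{C}_v$ is algebraically closed and complete), $f_i(B_{r_i}(a_i))$ is the closed ball centered at $c_0 = f_i(a_i)$ of radius $\sup_{k\geq 1}|c_k|\, r_i^k$; by hypothesis (1) this equals $B_{t_i}(b_i)$, so $\sup_{k\geq 1}|c_k| r_i^k = t_i$ and $|c_0 - b_i| \leq t_i$. Since $\varepsilon < t_i$, for an index $k \geq 1$ achieving $|c_k| r_i^k = t_i$ we get $|c_k - c'_k| r_i^k < |c_k| r_i^k$, hence $|c'_k| r_i^k = t_i$; and $|c_k - c'_k| r_i^k < t_i$ for all $k \geq 1$ forces $\sup_{k\geq 1}|c'_k| r_i^k = t_i$, while $|c'_0 - c_0| < t_i$ gives $|c'_0 - b_i| \leq t_i$. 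Therefore $F_\varepsilon(B_{r_i}(a_i)) = B_{t_i}(c'_0) = B_{t_i}(b_i)$, completing the proof. (The degenerate case where some $f_i$ is constant on its ball, i.e. $t_i = 0$, would have to be read separately, but does not arise when the $B_{t_i}(b_i)$ are honest balls.)
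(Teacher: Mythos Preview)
Your argument is correct, and it proceeds along a genuinely different route from the paper's. The paper does \emph{not} build an exact partition of unity: it defines independent ``bump'' functions
\[
h_i(z) = \frac{1}{1 - \bigl((z-a_i)/c_i\bigr)^{M_i}}
\]
with $|c_i| = \sqrt{r_i\delta_i}$, $\delta_i = \min_{j\neq i}|a_i - a_j|$, satisfying $|h_i - 1| \leq (r_i/\delta_i)^{M_i/2}$ on $B_{r_i}(a_i)$ and $|h_i| \leq (r_i/\delta_i)^{M_i/2}$ outside $D_{\delta_i}(a_i)$, then sets $F_\varepsilon = \sum_i f_i h_i$; since the $h_i$ do not sum to $1$, the difference $F_\varepsilon - f_i$ picks up the extra term $f_i(h_i - 1)$. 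Your Lagrange-type construction $E_i^{(N)} = \prod_{\ell\neq i}(z-a_\ell)^N \big/ \sum_k \prod_{\ell\neq k}(z-a_\ell)^N$ enforces $\sum_i E_i^{(N)} \equiv 1$, which makes the error formula $F_\varepsilon - f_i = \sum_{j\neq i}(f_j - f_i)E_j^{(N)}$ cleaner. For the image identity $F_\varepsilon(B_{r_i}(a_i)) = B_{t_i}(b_i)$, the paper argues directly that for suitable $z$ one has $|F_\varepsilon(z) - b_i| = |f_i(z) - b_i|$, whereas you pass through the Gauss norm on the Taylor coefficients, which is more conceptual but invokes slightly more machinery. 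One practical advantage of the paper's $h_i$ is that they are simple enough to differentiate explicitly (e.g.\ $h_i'(a_i) = 0$ when $M_i > 1$), which the authors exploit in the later corollaries on indifferent fixed points; your $E_i^{(N)}$ would be more cumbersome there. Both proofs tacitly need $t_i > 0$, as you note.
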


\begin{Corollary*}
    Let $f_1,\ldots,f_n\in\mathbb{C}_v(z)$, $B_{r_1}(a_1),\ldots,B_{r_n}(a_n)$ as in Theorem \ref{T1}. Let us assume that for each $i\in\{1,\ldots,n\}$, $f_i$ has $n_i$ attracting, $m_i$ repelling and $l_i$ indifferent fixed points in $B_{r_i}(a_i)$, with $n_i,m_i,l_i\geq0$. Then we can choose $\varepsilon>0$ such that $F_\varepsilon$ has $n_i$ attracting and $m_i$ repelling fixed points in $B_{r_i}(a_i)$ for all $i=1,\ldots,n$. Moreover, if for each indifferent fixed point of $f_i$ in $B_{r_i}(a_i)$ the hypothesis of Corollary \ref{C3} hold, then $F_\varepsilon$ also has $l_i$ indifferent fixed points in $B_{r_i}(a_i)$ for all $i=1,\ldots,n$.
\end{Corollary*}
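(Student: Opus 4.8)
The plan is to feed the function $F_\varepsilon$ supplied by Theorem~\ref{T1} into a non-Archimedean Rouché-type argument that controls, at the same time, the number of fixed points in each ball and their multipliers under a small perturbation. Since only finitely many balls are in play, it suffices to produce for each $i$ a threshold $\varepsilon_i>0$ that works on $B_{r_i}(a_i)$ and then take $\varepsilon<\min_i\varepsilon_i$. Fix $i$ and abbreviate $B=B_{r_i}(a_i)$, $f=f_i$, $F=F_\varepsilon$, and write $\|h\|_B:=\sup_{z\in B}|h(z)|$. By hypothesis~(1), $f(B)=B_{t_i}(b_i)\subset\mathbb{C}_v$, so $f$ has no pole in $B$ and is given on $B$ by a convergent power series; the same holds for $F$ since $F(B)=B_{t_i}(b_i)$. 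Hence $g:=F-f$ is analytic on $B$ with $\|g\|_B<\varepsilon$, and the ultrametric Cauchy estimate gives $\|g'\|_B\le\|g\|_B/r_i<\varepsilon/r_i$; also $\|f\|_B\le1$ by hypothesis~(2), so $f'$ is Lipschitz on $B$ with a constant $C$ depending only on $f$ and $B$.

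First I would localize the fixed points. Let $p_1,\dots,p_s$ be the zeros of $f(z)-z$ in $B$, with multiplicities $k_1,\dots,k_s$; factoring $f(z)-z=\big(\prod_j(z-p_j)^{k_j}\big)v(z)$ with $v$ analytic and zero-free on $B$, so that $|v|$ is a constant $\lambda>0$ on $B$, yields $|f(z)-z|=\lambda\prod_j|z-p_j|^{k_j}$ for all $z\in B$. Fix $\delta>0$ smaller than all the distances $|p_j-p_l|$. Outside $\bigcup_jB_\delta(p_j)$ one has $|f(z)-z|\ge\lambda\delta^{\sum_j k_j}$, so once $\varepsilon$ is below this value the ultrametric inequality forces $F(z)-z\ne0$ there; inside each $B_\delta(p_j)$, where $f(z)-z$ has the shape $(\text{unit})\cdot(z-p_j)^{k_j}$ in absolute value, a Newton-polygon computation shows that $F(z)-z$ has exactly $k_j$ zeros counted with multiplicity once $\varepsilon$ is small enough. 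Thus, for $\varepsilon$ small (depending on $\delta$), every fixed point of $F$ in $B$ lies in a unique $B_\delta(p_j)$, which contains exactly $k_j$ of them; moreover $\delta$ itself may be made arbitrarily small by shrinking $\varepsilon$ further.

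Next I would transfer the multiplier type. For $z,z'\in B$ with $|z-z'|\le\delta$, $|F'(z')-f'(z)|\le\max\big(\|g'\|_B,\,C\delta\big)\le\max(\varepsilon/r_i,\,C\delta)\to0$ as $\varepsilon\to0$. If $p_j$ is attracting or repelling then $f'(p_j)\ne1$, so $p_j$ is a simple zero of $f(z)-z$ and $B_\delta(p_j)$ carries a unique simple fixed point $p_j'$ of $F$; choosing $\varepsilon$ with $\max(\varepsilon/r_i,C\delta)<|f'(p_j)|$, the ultrametric inequality gives $|F'(p_j')|=|f'(p_j)|$, so $p_j'$ has the same type. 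Distinct attracting (resp.\ repelling) fixed points of $f$ then yield distinct ones of $F$, giving at least $n_i$ attracting and at least $m_i$ repelling fixed points of $F$ in $B$. Conversely, if $p_j$ is indifferent then $|f'(p_j)|=1$, and for any fixed point $q'$ of $F$ in $B_\delta(p_j)$ the same estimate gives $|F'(q')-f'(p_j)|<1=|f'(p_j)|$, hence $|F'(q')|=1$; so no fixed point of $F$ clustering near an indifferent fixed point of $f$ is attracting or repelling. Since every fixed point of $F$ in $B$ lies near some $p_j$, $F$ has exactly $n_i$ attracting and $m_i$ repelling fixed points in $B$; taking $\varepsilon$ below the minimum of all the thresholds above (over $i$ and $j$) proves the first assertion.

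The computation just made also shows that every fixed point of $F$ near an indifferent fixed point of $f$ is again indifferent, so the indifferent fixed points of $F$ in $B$ counted \emph{with} multiplicity already match those of $f$. The remaining point — and the step I expect to be the real obstacle — is the count \emph{without} multiplicity: a parabolic fixed point of $f$, i.e.\ the case $f'(p_j)=1$ with $k_j\ge2$, could split under perturbation into several distinct indifferent fixed points of $F$, changing the number of points. This is exactly where the hypothesis of Corollary~\ref{C3} should be invoked: imposed at each indifferent fixed point of $f_i$, it controls the local form of $F$ near such a $p_j$ so that the $k_j$ perturbed fixed points do not separate but remain a single indifferent fixed point of the same multiplicity. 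Summing over $j$ then yields exactly $l_i$ indifferent fixed points of $F_\varepsilon$ in $B_{r_i}(a_i)$, and a final choice of $\varepsilon$ below all the thresholds obtained (over every $i$ and every fixed point) finishes the proof.
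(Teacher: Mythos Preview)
Your approach is genuinely different from the paper's. The paper never counts zeros or estimates derivatives: it uses Lemma~\ref{LemaImagenIgual} (via Remark~\ref{Rem}(3)) to transport the image disks $f_i(D_r(b_{i,j}))=D_{t_{i,j}}(b_{i,j})$ to $F_\varepsilon$, and then reads off the existence and type of a fixed point in each $D_r(b_{i,j})$ directly from Proposition~\ref{T418} (parts (a), (c), (d)) and Corollary~\ref{C3}. Your Rouch\'e\,/\,Cauchy-estimate route is more general---it would work for any sufficiently close analytic perturbation, not just the specific $F_\varepsilon$ of~\eqref{F}---and in fact yields the sharper conclusion that $F_\varepsilon$ has \emph{exactly} $n_i$ attracting and $m_i$ repelling fixed points in $B_{r_i}(a_i)$, whereas the paper's argument only produces at least that many, one per small disk.

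Two points need attention. First, a minor one: your condition $\max(\varepsilon/r_i,C\delta)<|f'(p_j)|$ is vacuous when $p_j$ is superattracting; there you only need $\max(\varepsilon/r_i,C\delta)<1$ to conclude $|F'(p_j')|<1$, so the argument still goes through with a one-line fix. Second, and more substantively, your final paragraph on the indifferent case is not an argument but a hope, and the mechanism you propose---that the hypotheses of Corollary~\ref{C3} ``control the local form of $F$'' so that the $k_j$ perturbed fixed points ``do not separate''---is not what actually happens. The relevant hypothesis in Corollary~\ref{C3} is $|f_i'(p_j)-1|=1$; differentiating your factorization $f(z)-z=(z-p_j)^{k_j}v(z)$ at $z=p_j$ shows immediately that this forces $k_j=1$. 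Thus under that hypothesis every indifferent fixed point of $f_i$ is already a \emph{simple} zero of $f_i(z)-z$, your Rouch\'e count gives a single fixed point of $F_\varepsilon$ near it, and your own derivative estimate then shows it is indifferent. No separate ``non-splitting'' input is required: you already have all the pieces, and what is missing is just the observation that the condition $|f_i'(p_j)-1|=1$ rules out multiplicity $\ge 2$ at the outset.
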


The structure of the paper is organized as follows:
\begin{itemize}
    \item In Section \ref{sec:Basics of $p$-adic dynamics}, we establish the fundamentals of non-Archimedean dynamics.
    \item In Section \ref{main}, we present rigorous proofs of the main theorem and its corollaries, highlighting how the approximated function maintains the essential dynamical properties.
    \item Section \ref{sec:examples} provides concrete examples to demonstrate our theoretical assertions, emphasizing the inheritance of dynamic properties such as fixed points and stability.
\end{itemize}

\section{Basics in Non-Archimedean Dynamics} % (fold)
\label{sec:Basics of $p$-adic dynamics}

This section establishes the fundamental concepts for comprehending dynamics in the non-Archimedean setting. For a more in-depth exploration, we refer to \cite{benedetto2019dynamics} and \cite{rivera2003dynamique}.

Consider a rational function $f\in\mathbb{C}_v(z)$. The following results elucidate the impact of $f$ on disks and balls. Despite some of these findings originally being formulated for power series, they extend seamlessly to rational functions and for the projective line. 

\begin{Proposition}[Proposition 3.25 in \cite{benedetto2019dynamics}]
    Let $D$ be a disk (or ball) contained within $\mathbb{P}(\mathbb{C}_v)$, and let $f\in\mathbb{C}_v(z)$ be a nonconstant rational function. Then, the image $f(D)$ is either $\mathbb{P}(\mathbb{C}_v)$ or a disk (or ball). In the latter case, there exists an integer $d\geq1$ such that every point in $f(D)$ has exactly $d$ preimages in $D$, taking multiplicity into account.
\end{Proposition}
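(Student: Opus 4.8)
The plan is to normalize the situation by Möbius transformations so that $f$ becomes a power series on a ball (or disk) centred at the origin, and then to read off both the image and the cardinality of the fibres from the Newton polygon of that series. First I would dispose of the trivial alternative: if $f(D)=\mathbb{P}(\mathbb{C}_v)$ there is nothing to prove, so assume there is a point $w_0\in\mathbb{P}(\mathbb{C}_v)\setminus f(D)$. Choosing $\sigma\in\mathrm{PGL}_2(\mathbb{C}_v)$ carrying a ball $B_\rho(0)$—or a disk $D_\rho(0)$, according to the type of $D$—onto $D$, and $\tau\in\mathrm{PGL}_2(\mathbb{C}_v)$ sending $w_0$ to $\infty$, I replace $f$ by $F=\tau\circ f\circ\sigma$. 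Since every element of $\mathrm{PGL}_2(\mathbb{C}_v)$ is a degree-one bijection of $\mathbb{P}(\mathbb{C}_v)$ carrying $\mathbb{P}(\mathbb{C}_v)$-disks and $\mathbb{P}(\mathbb{C}_v)$-balls to the same, this changes neither the number of preimages of any point nor the ``disk/ball'' type of the image; and, because $\infty\notin F(D)$, the map $F$ has no pole on its domain. So it suffices to treat a nonconstant rational $F$ whose domain is $B_\rho(0)$ (treated first) or $D_\rho(0)$, with no pole there.

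\noindent\textbf{The ball case.}
Write $F=P/Q$ with $P,Q\in\mathbb{C}_v[z]$; since $Q$ has no zero in $B_\rho(0)$, every zero of $Q$ has absolute value $>\rho$, hence $1/Q$, and so $F$, is represented on $B_\rho(0)$ by a convergent power series $F(z)=\sum_{i\ge0}c_iz^i$ with $|c_i|\rho^i\to0$. Put $s=\max_{i\ge1}|c_i|\rho^i$; because $F$ is nonconstant this maximum is positive and attained, so $d=\max\{i\ge1:|c_i|\rho^i=s\}$ is a well-defined integer $\ge1$. For $|z|\le\rho$ the ultrametric inequality gives $|F(z)-c_0|\le\max_{i\ge1}|c_i||z|^i\le s$, so $F(B_\rho(0))\subseteq B_s(c_0)$. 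For the reverse inclusion and the fibre count I would invoke the theory of Newton polygons for convergent power series over a complete, algebraically closed non-Archimedean field (equivalently the Weierstrass preparation theorem; see \cite{benedetto2019dynamics}): for any $w$, the series $g_w(z)=F(z)-w=\sum_{i\ge0}g_{w,i}z^i$ has, counted with multiplicity, exactly $N(w)=\max\{i\ge0:|g_{w,i}|\rho^i=\max_j|g_{w,j}|\rho^j\}$ zeros in $B_\rho(0)$. When $|w-c_0|\le s$ we have $|g_{w,0}|=|c_0-w|\le s$ while $g_{w,i}=c_i$ for $i\ge1$, whence $\max_j|g_{w,j}|\rho^j=s$ and $N(w)=d$, \emph{independently of $w$}. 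Thus for every such $w$ the equation $F(z)=w$ has $d\ge1$ solutions in $B_\rho(0)$, in particular at least one, giving $F(B_\rho(0))=B_s(c_0)$, and every point of this ball has exactly $d$ preimages in $B_\rho(0)$ counted with multiplicity—the multiplicity of a preimage $z_0$ being its order as a zero of $g_w$, i.e.\ the local degree of $F$ at $z_0$. Undoing the normalization (conjugating back by $\sigma$ and $\tau$, which are bijections preserving the disk/ball type and the fibre cardinalities) proves the proposition when $D$ is a ball.

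\noindent\textbf{The disk case.}
For $D=D_\rho(0)$ I would exhaust it as $D_\rho(0)=\bigcup_{0<t<\rho}B_t(0)$ and apply the ball case to each $B_t(0)$: one gets $F(B_t(0))=B_{s(t)}(c_0)$ with $s(t)=\max_{i\ge1}|c_i|t^i$ nondecreasing in $t$, and the number of preimages on $B_t(0)$ equal to $\max\{i\ge1:|c_i|t^i=s(t)\}$. Because $F$ is rational without pole in $D_\rho(0)$ it is bounded there, so $s(t)$ stays bounded; passing to the union shows $F(D_\rho(0))=\bigcup_t B_{s(t)}(c_0)$ is an open $\mathbb{P}(\mathbb{C}_v)$-disk $D_{s_\infty}(c_0)$ (or the ball $B_{s_\infty}(c_0)$ if the radii stabilize). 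I would then check that the local degree also stabilizes to a fixed $d$ as $t\uparrow\rho$, each point of $F(D_\rho(0))$ having its full $F$-preimage inside a suitable $B_t(0)\subseteq D_\rho(0)$, hence exactly $d$ preimages there with multiplicity. Conjugating back by $\sigma$ and $\tau$ completes the proof.

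\noindent\textbf{Main obstacle.}
The heart of the matter is the middle step: proving at once that $F$ maps $B_\rho(0)$ \emph{onto} $B_s(c_0)$ and that the fibre has the \emph{same} size $d$ over every point of that ball. Both facts rest on the Weierstrass preparation theorem / the Newton-polygon computation of the zeros of a convergent power series inside a ball, and the reason the count is uniform is structural: the only coefficient of $g_w=F-w$ that depends on $w$ is the constant term $c_0-w$, whose absolute value never exceeds $s=|c_d|\rho^d$, so it cannot alter the part of the Newton polygon that governs the zeros lying in $B_\rho(0)$. A secondary, more bookkeeping-heavy point is controlling the open/closed ``$\mathbb{P}(\mathbb{C}_v)$-disk'' type of the image through the two Möbius reductions and through the exhaustion argument, together with verifying the stabilization of the degree in the disk case.
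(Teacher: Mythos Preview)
The paper does not prove this proposition; it merely quotes it from Benedetto's monograph and moves on, so there is no in-paper argument to compare against. Your proposal is correct and is essentially the standard proof found there: reduce by M\"obius transformations to a pole-free power series on $B_\rho(0)$, then read off both the image and the uniform fibre cardinality from the Newton polygon (equivalently, Weierstrass preparation), and handle the open disk by exhaustion.

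Two minor points on your disk case. First, the alternative ``or the ball $B_{s_\infty}(c_0)$ if the radii stabilize'' cannot occur: each nonzero term $|c_i|t^i$ with $i\ge1$ is strictly increasing in $t$, hence so is $s(t)=\max_{i\ge1}|c_i|t^i$, and the image of an open disk under a nonconstant map is therefore always an open disk, never a closed ball. Second, the stabilization of the local degree that you defer (``I would then check\ldots'') follows immediately once you note that $d(t)$ is non-decreasing in $t$ (a larger ball contains at least as many preimages of a given point) and bounded above by $\deg F$; hence $d(t)$ is eventually constant, and for any $w\in F(D_\rho(0))$ its finitely many preimages in $D_\rho(0)$ all lie in some $B_t(0)$ with $t$ in the stable range, giving exactly $d$ of them. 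With these clarifications the argument is complete.
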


In the previous proposition, assume that $f(D)$ is a disk and let $b\in f(D)$ be a point. It is worth noting that the function $f(z)-b$ possesses precisely $d$ zeros in $D$, considering multiplicity. In this context, we term $d$ as the Weierstrass degree of $f(z)-b$ in $D$, denoted by $\wdeg_{D}(f(z)-b)$. Generally, the Weierstrass degree of a rational function $f$ on $D$ is equivalent to the number of zeros of $f$ within $D$~\cite[Theorem 3.13]{benedetto2019dynamics}.

Notably, $\wdeg_D(f(z)-b)=1$ holds if and only if the mapping $f:D\to f(D)$ is a bijection.

Now, let us delve into a lemma that elucidates the behavior of a rational function $f$ within a disk $D$ when $f(D)\subset D$:
\begin{Proposition}[Schwartz's Lemma in \cite{rivera2003dynamique}]
\label{LSch}
Consider a rational function $f\in\mathbb{C}_v(z)$ %with $f(0)=0$
and let $r>0$. Let us assume that $f(D_r(0))\subset D_r(0)$. Then, the following statements are equivalent:
\begin{enumerate}
\item $f:D_r(0)\to D_r(0)$ is a bijection.
\item There exists $z\in D_r(0)$ such that $|f'(z)|=1$.
\item $|f'(z)|=1$ for all $z\in D_r(0)$.
\item There exist distinct points $x,y\in D_r(0)$ such that $|f(x)-f(y)|=|x-y|$.
\item $|f(x)-f(y)|=|x-y|$ for all $x,y\in D_r(0)$.
\end{enumerate}
Moreover, let $x,y\in D_r(0)$, then $|f(x)-f(y)|\leq|x-y|$ and $|f'(z)|\leq1$ for all $z\in D_r(0)$.
\end{Proposition}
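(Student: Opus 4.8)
The plan is to reduce the statement to the power-series expansion of $f$ near the centre $0$. Since $f(D_r(0))\subset D_r(0)$, the rational function $f$ is bounded on $D_r(0)$ and hence has no poles there, so it is given by a convergent power series $f(z)=\sum_{n\ge 0}c_nz^n$ on $D_r(0)$, with $|c_0|=|f(0)|<r$. The non-Archimedean maximum modulus principle gives $\sup_{|z|\le s}|f(z)|=\max_n|c_n|s^n<r$ for every $s\in|\mathbb{C}_v^\times|$ with $s<r$; letting $s\to r^{-}$ yields $|c_1|\le 1$ and $|c_n|\le r^{1-n}$ for $n\ge 2$, equivalently $|c_n|s^{n-1}\le (s/r)^{n-1}<1$ for all $n\ge 2$ and all $s<r$. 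From $f(x)-f(y)=(x-y)\sum_{n\ge1}c_n(x^{n-1}+\dots+y^{n-1})$ and the ultrametric inequality (with $s=\max(|x|,|y|)$) one reads off $|f(x)-f(y)|\le|x-y|$, and from $f'(z)=\sum_{n\ge1}nc_nz^{n-1}$ together with $|n|\le1$ one gets $|f'(z)|\le 1$ on $D_r(0)$; this is the ``moreover'' part.

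\textbf{The equivalences (2)--(5).} The key observation is that in both $f'(z)$ and the difference quotient $(f(x)-f(y))/(x-y)$ the ``nonlinear tail'' (the sum over $n\ge 2$) has absolute value \emph{strictly} less than $1$, by the estimate above, while the linear term is $c_1$ with $|c_1|\le 1$. Hence $|f'(z)|=\max(|c_1|,|\mathrm{tail}|)$, and similarly for the difference quotient, whenever the two terms have different absolute values. Consequently: if $|f'(z_0)|=1$ for some $z_0$, or $|f(x_0)-f(y_0)|=|x_0-y_0|$ for some distinct pair, then necessarily $|c_1|=1$; and conversely $|c_1|=1$ makes the linear term dominate the tail \emph{everywhere}, so $|f'(z)|=1$ for all $z$ and $|f(x)-f(y)|=|x-y|$ for all $x,y$. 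Together with the trivial implications (3)$\Rightarrow$(2) and (5)$\Rightarrow$(4), this shows that (2), (3), (4), (5) are equivalent, all of them amounting to the single normalization $|c_1|=1$.

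\textbf{Equivalence with (1).} Replacing $f$ by $g:=f-f(0)$ (legitimate since $|f(0)|<r$, so $g(D_r(0))\subset D_r(0)$ with $g(0)=0$, and the five statements are unchanged), it remains to show $g:D_r(0)\to D_r(0)$ is bijective if and only if $|c_1|=1$. If $|c_1|=1$ then $g$ is an isometry by the equivalences above, hence injective; by the structure theorem recalled above its image is a $\mathbb{C}_v$-disk containing $0$ and containing points of every absolute value $<r$, forcing it to equal $D_r(0)$, so $g$ is bijective. Conversely, if $g$ is bijective then $g^{-1}$ is again analytic and maps $D_r(0)$ into itself, so the non-expanding estimate applies to $g^{-1}$ as well; chaining the two inequalities, $|x-y|=|g^{-1}(g(x))-g^{-1}(g(y))|\le|g(x)-g(y)|\le|x-y|$, so $g$ is an isometry, i.e. (5), hence $|c_1|=1$. (Alternatively the converse follows from the Weierstrass-degree calculus: injectivity on every $B_s(0)$ forces $\wdeg_{B_s(0)}(g)=1$, so $\max_n|c_n|s^n$ is attained only at $n=1$, whence $g(D_r(0))=\bigcup_{s<r}B_{|c_1|s}(0)=D_{|c_1|r}(0)$, which equals $D_r(0)$ only when $|c_1|=1$.)

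\textbf{Main obstacle.} The coefficient bookkeeping in the first two steps is routine; the substance is the link between the purely set-theoretic statement (1) and the analytic normalization $|c_1|=1$. This forces one to invoke the non-Archimedean counterparts of the open mapping and inverse function theorems (equivalently, the structure theorem for images of disks together with the Weierstrass-degree calculus), and to be careful with the fact that $D_r(0)$ is an \emph{open} disk, so that the coefficient estimates above are sharp only in the limit $s\to r^{-}$.
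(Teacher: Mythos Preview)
The paper does not supply a proof of this proposition: it is quoted as background (Schwarz's Lemma, cited from Rivera-Letelier) alongside the other results of Section~\ref{sec:Basics of $p$-adic dynamics}, all of which are stated with references and left unproved. So there is no in-paper argument to compare against.

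Your proof itself is essentially correct and follows the standard route: expand $f$ as a power series on $D_r(0)$, extract the coefficient bounds $|c_n|\le r^{1-n}$ from the maximum principle, observe that the nonlinear tail in both $f'(z)$ and the difference quotient has absolute value at most $s/r<1$ for $|z|,|x|,|y|\le s<r$, and conclude that (2)--(5) all collapse to the single condition $|c_1|=1$. The ``moreover'' clause drops out of the same estimates.

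One remark on the link with (1). Your first argument for the converse (apply the non-expanding bound to $g^{-1}$) presupposes that $g^{-1}$ is itself given by a convergent power series on all of $D_r(0)$; this is true but is exactly the content of the non-Archimedean inverse function theorem, so invoking it here is close to circular. Your parenthetical alternative via the Weierstrass degree is the clean way to close the loop: injectivity on each rational closed ball $B_s(0)\subset D_r(0)$ forces $\wdeg_{B_s(0)}(g)=1$, hence $g(B_s(0))=B_{|c_1|s}(0)$, and taking the union over $s<r$ gives $g(D_r(0))=D_{|c_1|r}(0)$; surjectivity onto $D_r(0)$ then forces $|c_1|=1$. I would promote that argument to the main text and drop the $g^{-1}$ version.
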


\begin{Proposition}[Proposition 3.20 in \cite{benedetto2019dynamics}]
\label{P320}
Let $D\subset\mathbb{C}_v$ be a disk of radius $r>0$, and let $f\in\mathbb{C}_v(z)$ be a rational function. Suppose the image $f(D)$ is a disk of radius $s>0$. Then, for all $x,y\in D$, we have
\begin{equation}
\label{E1}
|f(x)-f(y)|\leq\frac{s}{r}\cdot|x-y|.
\end{equation}
Moreover, if the Weierstrass degree of $f$ on $D$ is $1$, equality holds in \eqref{E1}.
\end{Proposition}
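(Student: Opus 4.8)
The plan is to reduce the general statement to the normalized situation covered by Schwarz's Lemma (Proposition~\ref{LSch}) by pre- and post-composing $f$ with suitable affine maps, and then transport the resulting inequality back. Write $D = D_\rho(a)$ (the argument is identical for a closed ball, replacing strict inequalities by non-strict ones) and $f(D) = D_s(b)$. Since $\mathbb{C}_v$ is algebraically closed, its value group is divisible and dense in $\mathbb{R}_{>0}$; however, to avoid any rationality hypothesis on $r,s$ I would instead argue directly, scaling only the \emph{variable} by elements of $\mathbb{C}_v^\times$ and handling the radii by a limiting/supremum argument. Concretely, pick $c \in \mathbb{C}_v^\times$ with $|c|$ as close as desired to $r$ from below and consider the affine change of coordinates $z \mapsto a + cz$; this turns the disk $D$ into (a disk containing) $D_1(0)$, up to the controlled discrepancy $|c|/r$.

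The key steps, in order, are as follows. First, I would set $g(w) := c'^{-1}\bigl(f(a + c w) - b\bigr)$ for suitable $c, c' \in \mathbb{C}_v^\times$ chosen so that $g$ maps a disk of radius $\approx 1$ into a disk of radius $\approx 1$, with the two approximations governed by $|c|/r$ and $s/|c'|$. Second, I would apply the final ("Moreover") clause of Schwarz's Lemma to $g$ to get $|g(w_1) - g(w_2)| \le |w_1 - w_2|$ for all $w_1, w_2$ in that disk. Third, I would undo the substitution: writing $w_i = c^{-1}(x_i - a)$, this reads $|c'|^{-1}|f(x_1) - f(x_2)| \le |c|^{-1}|x_1 - x_2|$, i.e. $|f(x_1) - f(x_2)| \le (|c'|/|c|)\,|x_1 - x_2|$. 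Fourth, letting $|c| \to r$ and $|c'| \to s$ along the value group and using the ultrametric inequality (the left side is fixed, so the bound passes to the infimum $s/r$), I obtain \eqref{E1}. For the equality clause, I would use Proposition~3.25 as quoted: if $\wdeg_D f = 1$ then $f : D \to f(D)$ is a bijection, hence $g$ satisfies condition (1) of Schwarz's Lemma, which forces condition (5), $|g(w_1) - g(w_2)| = |w_1 - w_2|$; tracing this back through the affine maps and the limiting argument yields equality in \eqref{E1}.

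The main obstacle I anticipate is the handling of irrational radii, i.e. radii not lying in $|\mathbb{C}_v^\times|$: the naive affine rescaling only works cleanly when $r, s \in |\mathbb{C}_v^\times|$, so the rational-ball case must be isolated first and then the general case recovered by approximating $D_\rho(a)$ from inside by rational disks $D_{r}(a)$ with $r \uparrow \rho$ (and correspondingly controlling the image radius). One must check that the image of such an exhausting family of disks is an exhausting family for $f(D)$ with radii tending to $s$ — this uses Proposition~3.25 again, since each $f(D_r(a))$ is itself a disk. A secondary technical point is the possibility that $b = \infty$ or that $D$ or $f(D)$ is a $\mathbb{P}(\mathbb{C}_v)$-disk (complement of a ball); here I would first move $\infty$ out of the picture by a Möbius coordinate change on the target, noting that all the relevant disks are, by hypothesis, honest disks in $\mathbb{C}_v$, so this case does not actually arise under the stated hypotheses and can be dismissed in a line.
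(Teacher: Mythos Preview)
The paper does not supply a proof of this proposition: it is quoted as a background result from Benedetto's book, like all the other statements in the preliminaries section, so there is nothing in the paper to compare your argument against.

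As for your proposal itself, the reduction to Schwarz's Lemma by affine rescaling is correct for the inequality \eqref{E1}: given $x,y\in D_r(a)$, pick $c,c'\in\mathbb{C}_v^\times$ with $\max(|x-a|,|y-a|)<|c|<r$ and $|c'|>s$, apply the ``moreover'' clause of Proposition~\ref{LSch} to $g(w)=c'^{-1}(f(a+cw)-b)$ on $D_1(0)$, and let $|c|\uparrow r$, $|c'|\downarrow s$ through the dense value group. One caution on the equality clause: with $|c|<r$ and $|c'|>s$ the rescaled map $g$ need not be \emph{onto} $D_1(0)$, so condition (1) of Proposition~\ref{LSch} is unavailable and an equality does not obviously survive a limiting argument. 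Your proposed fix---exhaust $D$ by rational subdisks $D_{r'}(a)$ on which exact scaling is possible, and check that the image radii increase to $s$---does work, since the images $f(D_{r'}(a))$ are nested disks with union $D_s(b)$; alternatively, when $\wdeg_D f=1$ the dominant linear coefficient $c_1$ of the power-series expansion of $f$ at $a$ gives $s=|c_1|\,r$, so $s/r\in|\mathbb{C}_v^\times|$ and one can scale exactly without any limit. You should also flag that Proposition~\ref{LSch} is the special case $s=r$ of what you are proving; the argument is not circular because Schwarz's Lemma has an independent proof via Newton polygons, but this is worth a sentence.
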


\begin{Lemma}[Corollary 2.20 in \cite{MR4510115}]
\label{Inj}
    Let $f\in\C_v(z)$ be a non-constant rational function such that
$f(D_r(a))=D_s(b)$. Then $f$ is bijective in $D_r(a)$ if and only if there exist $x,y\in D_r(a)$ such that $$|f(x)-f(y)|=\frac{s}{r}\cdot|x-y|.$$
\end{Lemma}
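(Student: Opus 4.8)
The plan is to translate the two-sided statement about distances into a statement about Weierstrass degrees, and to control those degrees through the monotone behaviour of the radius of the image ball as a function of the radius of the domain ball. First, two preliminary remarks. Since $f(D_r(a))=D_s(b)\subseteq\C_v$ omits $\infty$, $f$ has no poles in $D_r(a)$; because in an ultrametric disk every point is a centre, $D_r(a)=D_r(x_0)$ for each $x_0\in D_r(a)$, and there $f$ is given by a power series $f(z)=\sum_{k\ge0}c_k(z-x_0)^k$ convergent on all of $D_r(a)$. Second, recall the criterion stated above: for $b'\in f(D_r(a))$, the map $f$ is bijective on $D_r(a)$ if and only if $\wdeg_{D_r(a)}\!\big(f(z)-b'\big)=1$.

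For the forward implication, assume $f$ is bijective on $D_r(a)$. Since $b\in f(D_r(a))=D_s(b)$, the criterion gives $\wdeg_{D_r(a)}(f(z)-b)=1$, and the rational function $f(z)-b$ maps $D_r(a)$ onto $D_s(b)-b=D_s(0)$, a disk of radius $s$. Applying Proposition \ref{P320} to $f(z)-b$ on the disk $D_r(a)$, the equality case ($\wdeg=1$) yields $|f(x)-f(y)|=\tfrac{s}{r}|x-y|$ for \emph{every} $x,y\in D_r(a)$; since $D_r(a)$ is infinite, some distinct pair witnesses the desired identity.

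For the converse, I would fix distinct $x_0,y_0\in D_r(a)$ with $|f(x_0)-f(y_0)|=\tfrac{s}{r}|x_0-y_0|$, put $\rho_0:=|x_0-y_0|\in(0,r)$, and write $\phi(w):=f(x_0+w)-f(x_0)=\sum_{k\ge1}c_kw^k$, convergent for $|w|<r$. For $\rho\in(0,r)$ let $\tau(\rho)$ denote the radius of $f(B_\rho(x_0))$; by Proposition~3.25 of \cite{benedetto2019dynamics} this set is a ball (it is contained in $D_s(b)$, hence is not all of $\mathbb{P}(\C_v)$), and by the standard description of the image of a ball under a power series with no constant term, $\tau(\rho)=\|\phi\|_\rho:=\max_{k\ge1}|c_k|\rho^k$. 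Three facts would then be recorded. (i) $\tau(\rho)\le\tfrac{s}{r}\rho$ for all $\rho\in(0,r)$: indeed $B_\rho(x_0)\subseteq D_r(a)$, so Proposition \ref{P320} applied on $D_r(a)$ bounds the diameter of $f(B_\rho(x_0))$ by $\tfrac{s}{r}\rho$. (ii) $\rho\mapsto\tau(\rho)/\rho=\max_{k\ge1}|c_k|\rho^{k-1}$ is non-decreasing, being a maximum of non-decreasing functions of $\rho$. (iii) $\tau(\rho_0)\ge|f(x_0)-f(y_0)|=\tfrac{s}{r}\rho_0$, since $f(x_0)$ and $f(y_0)$ both lie in the ball $f(B_{\rho_0}(x_0))$. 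Combining (i)--(iii) forces $\tau(\rho)/\rho=\tfrac{s}{r}$ for every $\rho\in[\rho_0,r)$.

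It then remains to read the Weierstrass degree off this. With $\|\phi\|_\rho=\tfrac{s}{r}\rho$ on $[\rho_0,r)$, every index $k\ge2$ with $c_k\ne0$ satisfies $|c_k|\rho^{k-1}\le\tfrac{s}{r}$ throughout $[\rho_0,r)$; but $\rho\mapsto|c_k|\rho^{k-1}$ is strictly increasing, so necessarily $|c_k|\rho^{k-1}<\tfrac{s}{r}$ strictly for all such $\rho$. Hence the Gauss norm $\|\phi\|_\rho$ is attained only by the term $k=1$, so the Newton-polygon description of the Weierstrass degree gives $\wdeg_{B_\rho(x_0)}(\phi)=1$ for every $\rho\in[\rho_0,r)$; equivalently, $\phi$ has exactly one zero (with multiplicity) in each such ball. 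Letting $\rho\uparrow r$ and using that $\phi$ has only finitely many zeros in $D_r(a)=\bigcup_{\rho<r}B_\rho(x_0)$, one concludes $\wdeg_{D_r(a)}\!\big(f(z)-f(x_0)\big)=1$, and the criterion yields that $f\colon D_r(a)\to D_s(b)$ is a bijection. I expect this last step to be the crux: it depends on identifying $\tau(\rho)$ with the Gauss norm $\|\phi\|_\rho$ and on the Newton-polygon formula for $\wdeg_{B_\rho(x_0)}(\phi)$, which is exactly where the non-archimedean analyticity of $f$ on the disk enters and upgrades the one-sided estimate of Proposition \ref{P320} to the claimed equivalence; the preliminary reductions and the forward implication are comparatively routine.
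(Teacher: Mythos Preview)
The paper does not prove this lemma; it is quoted without proof as Corollary~2.20 of \cite{MR4510115} among the background results in the preliminaries section, so there is no in-paper argument to compare your attempt against.

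Assessed on its own, your argument is correct. The forward direction is exactly the equality case of Proposition~\ref{P320}. For the converse, your Gauss-norm/Newton-polygon computation is sound: the identification $\tau(\rho)=\|\phi\|_\rho$, the monotonicity of $\rho\mapsto\|\phi\|_\rho/\rho=\max_{k\ge1}|c_k|\rho^{k-1}$, and the upper bound $\tau(\rho)\le\tfrac{s}{r}\rho$ from Proposition~\ref{P320} together force $|c_k|\rho^{k-1}<\tfrac{s}{r}$ strictly for every $k\ge2$ and every $\rho\in[\rho_0,r)$, so the dominant term is $k=1$ throughout. The passage from $\wdeg_{B_\rho(x_0)}(\phi)=1$ for all $\rho<r$ to $\wdeg_{D_r(a)}(\phi)=1$ is justified by your observation that the rational function $\phi$ has only finitely many zeros, and the final appeal to the constant preimage count (Proposition~3.25 of \cite{benedetto2019dynamics}) to upgrade $\wdeg_{D_r(a)}(f-f(x_0))=1$ to bijectivity on $D_r(a)$ is likewise legitimate.
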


\begin{Lemma}[Lemma 2.21 in \cite{MR4510115}]
\label{LemaImagenIgual}
    Consider two rational functions $f,g\in\C_v(z)$ and let $a,b\in\C_v$.
Assume that $f(D_r(a))=D_s(b)$ for some $r,s>0$. If there exists $t<s$ such that $|f(z)-g(z)|\leq t$ for all $z\in D_r(a)$, then $g(D_r(a))=D_s(b)$. Moreover, if $f$ is bijective in $D_r(a)$ then $g$ is also bijective in $D_r(a)$.
\end{Lemma}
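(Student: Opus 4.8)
The plan is to prove the two assertions separately. For the first, I would verify the two set inclusions $g(D_r(a))\subseteq D_s(b)$ and $D_s(b)\subseteq g(D_r(a))$, the latter being the substantive one. (One could first conjugate by affine maps to normalize $a=b=0$, $r=s=1$, but this is cosmetic and I would keep the original coordinates.) The inclusion $g(D_r(a))\subseteq D_s(b)$ is immediate from the ultrametric inequality: for $z\in D_r(a)$ we have $|f(z)-b|<s$ since $f(D_r(a))=D_s(b)$, and $|g(z)-f(z)|\le t<s$, hence $|g(z)-b|\le\max\{|g(z)-f(z)|,|f(z)-b|\}<s$.

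Next I would record that $g$ is non-constant: otherwise $g\equiv c_0$ with $c_0\in D_s(b)$ by the inclusion just proved, so $B_t(c_0)\subseteq D_s(b)$ (as $t<s$), while $|f(z)-c_0|=|f(z)-g(z)|\le t$ for all $z$ forces $D_s(b)=f(D_r(a))\subseteq B_t(c_0)$; this would make $B_t(c_0)=D_s(b)$, impossible for $t<s$ since the value group of $\C_v$ is dense. Therefore Proposition 3.25 of \cite{benedetto2019dynamics} applies to $g$, and since $g(D_r(a))\neq\mathbb{P}(\C_v)$, the image $D':=g(D_r(a))$ is a disk or a ball contained in $D_s(b)$; fix a representation $D'=D_{s'}(b')$ or $D'=B_{s'}(b')$, so that $b'\in D'\subseteq D_s(b)$.

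The heart of the argument is to show $D'=D_s(b)$. Suppose not, and pick $c_0\in D_s(b)\setminus D'$. Since $f$ is surjective onto $D_s(b)$, there is $z_0\in D_r(a)$ with $f(z_0)=c_0$; because $c_0\notin D'$ one has $|c_0-b'|\ge s'$, whence, by the isoceles-triangle property, $|c_0-y|=|c_0-b'|\ge s'$ for every $y\in D'$. Taking $y=g(z_0)\in D'$ gives $s'\le|c_0-g(z_0)|=|f(z_0)-g(z_0)|\le t$, so $D'\subseteq B_{s'}(b')\subseteq B_t(b')$. Now apply surjectivity of $f$ to an arbitrary $c\in D_s(b)$: choosing $z$ with $f(z)=c$, we get $g(z)\in D'\subseteq B_t(b')$ and $|c-g(z)|\le t$, hence $|c-b'|\le t$. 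Thus $D_s(b)\subseteq B_t(b')$ with $b'\in D_s(b)$, which is impossible for $t<s$ — density of the value group produces $x$ with $t<|x-b'|<s$, lying in $D_s(b)\setminus B_t(b')$. This contradiction shows $D'=D_s(b)$.

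For the bijectivity statement, assume $f$ is bijective on $D_r(a)$; then the Weierstrass degree of $f$ on $D_r(a)$ is $1$, so Proposition \ref{P320} gives $|f(x)-f(y)|=\tfrac{s}{r}|x-y|$ for all $x,y\in D_r(a)$. Pick $x,y\in D_r(a)$ with $\tfrac{tr}{s}<|x-y|<r$ (possible since $t/s<1$ and the value group is dense); then $|f(x)-f(y)|=\tfrac{s}{r}|x-y|>t\ge|g(x)-f(x)|,|g(y)-f(y)|$, so the equality case of the ultrametric inequality forces $|g(x)-g(y)|=|f(x)-f(y)|=\tfrac{s}{r}|x-y|$. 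As $g$ is non-constant and $g(D_r(a))=D_s(b)$, Lemma \ref{Inj} applied to $g$ yields that $g$ is bijective on $D_r(a)$. I expect the third paragraph to be the main obstacle: ruling out that $g(D_r(a))$ is a proper sub-disk or sub-ball of $D_s(b)$ (including the a priori possibility that it is a closed ball rather than an open disk). The mechanism is that surjectivity of $f$ realizes every value of $D_s(b)$ as some $f(z)$, and the ultrametric rigidity of functions that are uniformly close then forces the image of $g$ to fill all of $D_s(b)$; the remaining steps are direct consequences of the non-Archimedean triangle inequality and the propositions listed above.
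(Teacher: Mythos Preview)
The paper does not supply its own proof of this lemma: it is quoted verbatim as Lemma~2.21 of \cite{MR4510115} in the preliminaries section, with no argument given. So there is no in-paper proof to compare against.

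That said, your argument is correct and self-contained using only the tools the paper already records. The inclusion $g(D_r(a))\subseteq D_s(b)$ and the non-constancy of $g$ are handled cleanly; the key step---ruling out that $g(D_r(a))$ is a proper sub-disk or sub-ball of $D_s(b)$---is done by the right mechanism (surjectivity of $f$ plus the ultrametric forces every point of $D_s(b)$ to lie within $t$ of some point of $g(D_r(a))$, hence within $t$ of the center $b'$, contradicting $t<s$ via density of the value group). One small clarification you might add when invoking density at the end of that paragraph: since $b'\in D_s(b)$, the ultrametric gives $D_s(b)=D_s(b')$, so the point $x$ with $t<|x-b'|<s$ indeed lies in $D_s(b)$. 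For bijectivity, your use of Proposition~\ref{P320} together with Lemma~\ref{Inj} is exactly the intended route and matches how the paper itself argues in the proof of Corollary~\ref{C2}.
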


Now, we define the attracting, repelling, and indifferent periodic points essential to understanding the dynamics of a rational function. 

First, we recall that if $x\in\mathbb{C}_v$ is a periodic point (of $f$) of minimal period $n\geq1$, then the multiplier of $x$ is $\lambda=(f^n)'(x)$. And if $x=\infty$, then the multiplier of $x$ is $\lambda=(g^n)'(0)$, where $g=1/(f(1/z))$. 

\begin{Definition}[Definition 4.1 in \cite{benedetto2019dynamics}]
    Let $f\in\mathbb{C}_v(z)$ be a rational function, and let $x\in\mathbb{P}(\mathbb{C}_v)$ be a periodic point with multiplier $\lambda\in\mathbb{C}_v$. We say that $x$ is
    \begin{itemize}
        \item attracting if $|\lambda|<1$,
        \item repelling if $|\lambda|>1$,
        \item indifferent if $|\lambda|=1$.
    \end{itemize}
\end{Definition}

The following proposition describes the dynamics of a rational function near its periodic points.

\begin{Proposition}[Proposition 4.3 in \cite{benedetto2019dynamics}]
    Let $f\in\mathbb{C}_v(z)$ be a rational function, and let $x\in\mathbb{P}(\mathbb{C}_v)$ be a periodic point of period $m\geq1$.
    \begin{itemize}
        \item[(a)] If $x$ is attracting, then there is an open set $U\subset\mathbb{P}(\mathbb{C}_v)$ containing $x$ such that for all $y\in U$, $$\lim_{n\to\infty}f^{nm}(y)=x.$$ 
        \item[(b)] If $x$ is repelling, then there is an open set $U\subset\mathbb{P}(\mathbb{C}_v)$ containing $x$ such that for all $y\in U\setminus\{x\}$, there is some $n\geq1$ such that $f^{nm}(y)\notin U.$ 
        \item[(c)] If $x$ is nonrepelling, then there is an open $\mathbb{P}(\mathbb{C}_v)$-disk $U\subset\mathbb{P}(\mathbb{C}_v)$ containing $x$ such that $f^m(U)\subset U$. If $x$ is indifferent, then the mapping $f^m:U\to U$ is bijective. 
    \end{itemize}
\end{Proposition}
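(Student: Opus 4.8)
My plan is to reduce everything to the local study of one rational map at a fixed point in $\mathbb{C}_v$, expand that map as a power series there, and extract all three assertions from elementary ultrametric estimates on the coefficients. First I would make two harmless reductions. Since $f^{nm}=(f^m)^n$, and since the multiplier of a periodic point, the classification attracting/repelling/indifferent, and the class of open $\mathbb{P}(\mathbb{C}_v)$-disks are all invariant under passage to an iterate and under Möbius conjugation, I may replace $f$ by $g:=f^m$ and conjugate by a Möbius map so that $x$ becomes a \emph{fixed} point of $g$ lying at $0\in\mathbb{C}_v$; write $\lambda=g'(0)$. As $g(0)=0\neq\infty$, the representation $g=P/Q$ has $Q(0)\neq0$, so $g$ has a convergent expansion $g(z)=\lambda z+\sum_{k\geq2}c_k z^k$ on a disk $D_\rho(0)$ with $\rho>0$.

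The technical core is a single estimate. Because $|c_k|r^k\to0$ for each $r<\rho$, one checks that $\sup_{k\geq2}|c_k|r^{k-1}\to0$ as $r\to0^+$; fix $r<\rho$ small enough that $\delta:=\sup_{k\geq2}|c_k|r^{k-1}$ satisfies $\delta<1$ and, if $\lambda\neq0$, also $\delta<|\lambda|$. Using $|k|\leq1$ and the ultrametric inequality, this gives, for all $z\in D_r(0)$: $|g(z)|\leq c\,|z|$ with $c:=\max(|\lambda|,\delta)$; and, whenever $\lambda\neq0$, the sharper identities $|g(z)|=|\lambda|\,|z|$ and $|g'(z)|=|\lambda|$ throughout $D_r(0)$. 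In particular $g(D_r(0))\subseteq D_r(0)$, so $g$ may be iterated inside $D_r(0)$.

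Now each part follows. (a) If $x$ is attracting then $c<1$ (if $\lambda=0$ this reads $c=\delta<1$), so $|g^n(z)|\leq c^n|z|\to0$ for every $z\in D_r(0)$, and $U=D_r(0)$ works. (b) If $x$ is repelling then $|g(z)|=|\lambda|\,|z|$ with $|\lambda|>1$; for $y\in D_r(0)\setminus\{0\}$ one has $|g^j(y)|=|\lambda|^j|y|$ for every $j$ for which $g^{j-1}(y)\in D_r(0)$, and since $|\lambda|^j|y|\to\infty$ there is a least $n\geq1$ with $g^n(y)\notin D_r(0)=U$. (c) If $x$ is nonrepelling and attracting, part (a) gives $g(U)\subseteq U$; if $x$ is indifferent then $|g'(z)|=|\lambda|=1$ on all of $D_r(0)$ and $g(D_r(0))\subseteq D_r(0)$, so $g$ restricts to a bijection $D_r(0)\to D_r(0)$ by Schwartz's Lemma (Proposition~\ref{LSch}). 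Undoing the conjugation turns $D_r(0)$ into an open $\mathbb{P}(\mathbb{C}_v)$-disk (an open set, in (a) and (b)) containing $x$, and $g=f^m$ plays the required role.

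I expect no genuine obstacle: the points requiring care are the uniform tail estimate for the power series, the separate handling of the super-attracting case $\lambda=0$ in (a) and (c), and the bookkeeping in the reductions — namely that Möbius conjugation preserves the multiplier and that $f^{nm}=(f^m)^n$, so that statements proved for the fixed point $0$ of $g$ transfer back verbatim. All of this is routine ultrametric computation once the power-series expansion is in hand.
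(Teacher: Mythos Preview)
The paper does not prove this proposition; it is quoted from Benedetto's \emph{Dynamics in One Non-Archimedean Variable} (as ``Proposition 4.3 in \cite{benedetto2019dynamics}'') in the background Section~\ref{sec:Basics of $p$-adic dynamics} and no proof is supplied. Your proposal is correct and follows the standard argument one finds in that reference: reduce by iteration and M\"obius conjugation to a fixed point at $0$, expand $g=f^m$ as a convergent power series $\lambda z+\sum_{k\ge2}c_k z^k$, choose $r$ small enough that the tail is dominated by the linear term, and read off each case from the resulting estimate $|g(z)|=|\lambda|\,|z|$ (or $|g(z)|\le\delta\,|z|$ when $\lambda=0$), invoking Proposition~\ref{LSch} for bijectivity in the indifferent case. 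The points you flag as requiring care --- the uniform tail bound, the super-attracting case $\lambda=0$, and the invariance of multipliers and $\mathbb{P}(\mathbb{C}_v)$-disks under M\"obius conjugation --- are exactly the right ones, and none of them poses a genuine difficulty.
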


Proposition \ref{P417} plays a pivotal role in exploring non-\-arch\-i\-me\-de\-an fields and rational functions, providing us with essential conditions that determine the existence of fixed points.

\begin{Proposition}[Theorem 4.17 in \cite{benedetto2019dynamics}]
\label{P417} 
Let $r>0$, let $a\in\mathbb{C}_v$, and let $U$ be the disk $D_r(a)$ (resp. ball $B_r(a)$). Let $f\in\mathbb{C}_v(z)$ be a rational function such that $f(U)$ is a disk (resp. ball) with $d=\wdeg_{U}(f(z)-a)<\infty$. Suppose that $f(U)\cap U\neq\emptyset$, and either
\begin{itemize}
\item[(a)] $f(U)\neq U$, or
\item[(b)] $d\neq1$, or
\item[(c)] $d=1$ and $|f'(a)-1|=1$.
\end{itemize}
Then, the disk $U$ contains a fixed point of $f$.
\end{Proposition}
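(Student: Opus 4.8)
The plan is to convert the statement into a zero count: a point $z\in U$ with $f(z)=z$ is exactly a zero in $U$ of $g(z):=f(z)-z$, and since $f(U)$ is a disk (resp.\ ball) rather than all of $\mathbb{P}(\mathbb{C}_v)$, $f$---hence $g$---has no pole in $U$, so (as recalled in the excerpt) the Weierstrass degree of $g$ on $U$ equals the number of zeros of $g$ in $U$ counted with multiplicity. Thus it suffices to prove $\wdeg_U(g)\geq 1$ in each of the cases (a), (b), (c). Writing $g(z)=(f(z)-a)-(z-a)$, the tool will be the ultrametric behavior of the Weierstrass degree under subtraction, together with the sup-norm $\|h\|_U:=\sup_{z\in U}|h(z)|$: if $\|h_1\|_U\neq\|h_2\|_U$ then $\wdeg_U(h_1-h_2)$ is the Weierstrass degree of whichever term has larger norm; and if $\|h_1\|_U=\|h_2\|_U$ but $\wdeg_U(h_1)\neq\wdeg_U(h_2)$, the leading coefficients occupy distinct indices, cannot cancel, and $\wdeg_U(h_1-h_2)=\max(\wdeg_U h_1,\wdeg_U h_2)$. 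I will use $\wdeg_U(z-a)=1$, $\|z-a\|_U=r$, $\wdeg_U(f(z)-a)=d$, and the fact that $\|f(z)-a\|_U$ equals the radius of $f(U)$ whenever $a\in f(U)$.

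First I would split on the nesting dichotomy: because $f(U)\cap U\neq\emptyset$ and both are disks (resp.\ balls), exactly one of $f(U)\subsetneq U$, $f(U)=U$, $U\subsetneq f(U)$ holds. If $f(U)\subsetneq U$, then every $w\in f(U)$ satisfies $|w-a|<r$, so $\|f(z)-a\|_U<r=\|z-a\|_U$ and $\wdeg_U(g)=\wdeg_U(z-a)=1$; this case is consistent only with hypothesis (a). If $U\subsetneq f(U)$, then $a\in U\subseteq f(U)$ forces $d\geq 1$, the radius of $f(U)$ exceeds $r$, so $\|f(z)-a\|_U>\|z-a\|_U$ and $\wdeg_U(g)=d\geq 1$; again this falls under hypothesis (a).

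It remains to treat $f(U)=U$, where (a) fails and we must invoke (b) or (c). Here $a\in U=f(U)$ gives $1\leq d<\infty$ and $\|f(z)-a\|_U=r=\|z-a\|_U$. Under (b), $d\geq 2$, so the two Weierstrass degrees differ and $\wdeg_U(g)=\max(d,1)=d\geq 1$. Under (c), $d=1$, and I would pass to the expansion $f(z)=\sum_{n\geq 0}c_n(z-a)^n$; the conditions $\wdeg_U(f(z)-a)=1$ and $\|f(z)-a\|_U=r$ force $|c_1|=1$, $|c_0-a|\leq r$, and $|c_n|r^n<r$ for all $n\geq 2$, so in $g(z)=(c_0-a)+(c_1-1)(z-a)+\sum_{n\geq 2}c_n(z-a)^n$ the index-one coefficient satisfies $|c_1-1|=|f'(a)-1|=1$ by hypothesis, attaining $\|g\|_U=r$, while every index-$\geq 2$ term stays strictly below $r$; hence $\wdeg_U(g)\geq 1$. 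In every case $\wdeg_U(g)\geq 1$, so $g$ has a zero in $U$, i.e.\ $f$ has a fixed point in $U$; the ball version $U=B_r(a)$ runs identically, with the sup-norm taken over the closed ball.

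The step I expect to be the main obstacle is the Weierstrass-degree bookkeeping, in particular: (i) the additivity rule that equal sup-norms force the degree of a difference to be the \emph{larger} of the two degrees (this is what drives case (b) and has no Archimedean analogue), and (ii) in case (c), extracting $\|f(z)-a\|_U=r$ and $|c_1|=1$ from $\wdeg_U(f(z)-a)=1$ so that the assumption $|f'(a)-1|=1$ visibly keeps the linear term dominant while everything of higher order is suppressed. Tracking the position of $a$ relative to $f(U)$, and the passage between open disks and closed balls, will require consistent care throughout.
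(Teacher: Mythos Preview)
The paper does not supply a proof of this proposition: it is quoted as Theorem~4.17 of Benedetto's monograph and invoked as a black box in the proof of Corollary~\ref{C3}. There is therefore no in-paper argument to compare against.

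That said, your approach---reducing to $\wdeg_U\bigl(f(z)-z\bigr)\geq 1$ by writing $f(z)-z=(f(z)-a)-(z-a)$ and comparing the Gauss norms and dominant indices of the two summands---is precisely the standard Newton-polygon argument used to prove this result, and it is correct. The trichotomy $f(U)\subsetneq U$, $f(U)=U$, $U\subsetneq f(U)$ cleanly isolates where each hypothesis (a), (b), (c) enters, and your treatment of each branch is sound. One point of care worth making explicit: your $\|\cdot\|_U$ must be read as the Gauss norm $\max_n|c_n|r^n$ rather than the literal supremum of values over $U$ (the two agree on rational closed balls but can differ on open disks). With that reading, the strict inequality $\|f(z)-a\|_U<r$ in the case $f(U)\subsetneq U$ follows from the radius formula $s=\max_{n\geq 1}|c_n|r^n$ for the image disk, not merely from the pointwise bound $|f(z)-a|<r$; you gesture at this in your final paragraph, and it is indeed the only place requiring real vigilance.
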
 

\begin{Proposition}[Theorem 4.18 in \cite{benedetto2019dynamics} or in  \cite{benedetto2002components}]
\label{T418}
Let $r>0$, let $a\in\mathbb{C}_v$, let $U=D_r(a)$, and let $f\in\mathbb{C}_v(z)$ be a rational function such that $f(U)$ is a disk with $d=\wdeg_{U}(f(z)-a)<\infty$. Suppose either that
\begin{itemize}
\item[(a)] $f(U)\subsetneq U$, or
\item[(b)] $f:U\to U$ is onto, with $d\geq2$.
\end{itemize}
Then there is a unique attracting fixed point $b\in U$, and for every $x\in U$, we have $\lim_{n\to\infty}f^n(x)=b$. Moreover, in case (b), we must have $r\in|\mathbb{C}_v^\times|$, i.e., $U$ is a rational open disk.

On the other hand, if 
\begin{itemize}
\item[(c)] $f(U)\supsetneq U$ and $d=1$, 
\end{itemize}
then there is a unique repelling fixed point $b\in U$, and for every $x\in U\setminus\{b\}$, there is some $n\geq1$ such that $f^n(x)\notin U$.

Finally, if 
\begin{itemize}
\item[(d)] $f:U\to U$ is one-to-one and onto,
\end{itemize}
Then, any fixed points in $U$ will be indifferent. 
\end{Proposition}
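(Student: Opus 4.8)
The plan is to handle all four cases by a common device: first produce a fixed point $b\in U$ using Proposition \ref{P417}, then re-centre the open disk $U$ at $b$. Re-centring is harmless because every point of an open disk is one of its centres (and the same is true of the image disk $f(U)$), so after re-centring I may assume $b=a=0$. Since $f(U)$ is a bounded disk, $f$ has no pole in $U$, so $f$ has a local expansion $f(z)=\sum_{i\ge 1}c_i z^i$ on $U$ (with $c_0=0$ because $f(0)=0$), and the multiplier of the fixed point is $f'(0)=c_1$. Everything then reduces to estimating $|c_1|$ and the distortion $|f(x)-f(y)|$, for which the tools are Propositions \ref{P320} and \ref{LSch}.

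Cases (a), (c), (d) are short. In (a), re-centring turns $f(U)\subsetneq U$ into $D_s(0)\subsetneq D_r(0)$, hence $s<r$; Proposition \ref{P320} gives $|f(x)|\le (s/r)|x|<|x|$ for $x\in U\setminus\{0\}$, so $|f^n(x)|\le (s/r)^n|x|\to 0$, a second fixed point is impossible by the same inequality, and $|f'(0)|\le s/r<1$ makes $0$ attracting. In (c), the hypothesis $d=1$ means $f\colon U\to f(U)$ is a bijection (so $c_1\ne 0$), and $f(U)\supsetneq U$ becomes $D_s(0)\supsetneq D_r(0)$ with $s>r$; the equality clause of Proposition \ref{P320} gives $|f(x)|=(s/r)|x|>|x|$, so $|f'(0)|=s/r>1$ and $0$ is the unique (repelling) fixed point, and if some orbit remained in $U$ for all time then $|f^n(x)|=(s/r)^n|x|$ would exceed $r$, a contradiction, so every orbit in $U\setminus\{0\}$ eventually leaves $U$. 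In (d), $f\colon U\to U$ one-to-one and onto gives $\wdeg_U(f)=1$ and $f(U)=U$, so Schwartz's Lemma (Proposition \ref{LSch}), equivalently the equality clause of Proposition \ref{P320} with $s=r$, yields $|f'(b)|=1$: every fixed point in $U$ is indifferent.

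The substance is in case (b). After re-centring at a fixed point furnished by Proposition \ref{P417}(b), I have $f(0)=0$, $f(D_r(0))=D_r(0)$, and $d=\wdeg_{D_r(0)}(f)\ge 2$; that this degree is unchanged by the re-centring uses Proposition~3.25 of \cite{benedetto2019dynamics}, by which every point of $f(U)=U$ has the same number of preimages in $U$. Write $f(z)=z\,g(z)$ with $g(z)=c_1+c_2z+\cdots$, so $g$ has exactly $d-1\ge 1$ zeros in $D_r(0)$. From $f(D_r(0))=D_r(0)$ one reads off $\sup_{i\ge 1}|c_i|r^i=r$, hence $|c_i|\le r^{1-i}$ for all $i$; the index $d$ realising the open-disk Weierstrass degree satisfies $|c_d|r^d=r$, so $r^{1-d}=|c_d|\in|\mathbb{C}_v^\times|$, and since the value group of an algebraically closed field is divisible, $r\in|\mathbb{C}_v^\times|$ --- this is the ``moreover'' assertion. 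Next, if $|c_1|=1$ then for every $\rho<r$ and $|z|=\rho$ each non-constant term of $g$ obeys $|c_{i+1}z^i|=|c_{i+1}|r^i(\rho/r)^i\le(\rho/r)^i<1=|c_1|$, so $|g(z)|=|c_1|\ne 0$ throughout $D_r(0)$ --- contradicting that $g$ has a zero there. Hence $|c_1|=|f'(0)|<1$ and $0$ is attracting, and uniqueness follows as in (a). For convergence Proposition \ref{P320} is too weak (it only gives $|f(x)|\le|x|$), so I estimate termwise: $|f(x)|\le\max\bigl(|c_1|\,|x|,\,|x|^2/r\bigr)=|x|\cdot\max(|c_1|,|x|/r)$, the factor $M:=\max(|c_1|,|x|/r)$ is $<1$ and does not increase along the orbit, so $|f^n(x)|\le M^n|x|\to 0$.

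I expect the main obstacle to be the Weierstrass-degree bookkeeping in case (b): one must keep the open-disk convention in mind --- the Weierstrass degree on $D_r(0)$ is the limit as $\rho\to r^-$ of the degrees on the closed balls $B_\rho(0)$ --- and it is precisely this, together with $d\ge 2$, that simultaneously forces $|c_d|r^d=r$ (with $|c_d|$ in the value group, giving $r\in|\mathbb{C}_v^\times|$) and rules out $|c_1|=1$. A secondary thing to watch throughout is that re-centring genuinely preserves the hypotheses in play --- that $f(U)$ is a disk, the strict or non-strict nature of the inclusion between $f(U)$ and $U$, and the value of $\wdeg_U(f(z)-\cdot)$ --- the last of which is exactly where Proposition~3.25 of \cite{benedetto2019dynamics} enters.
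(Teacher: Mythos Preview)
The paper does not supply a proof of this proposition: it is quoted from \cite{benedetto2019dynamics} (and \cite{benedetto2002components}) as a background result in Section~\ref{sec:Basics of $p$-adic dynamics} and then used as a black box in Section~\ref{main}. There is therefore nothing in the paper to compare your argument against.

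That said, your proof is correct and follows the standard line one finds in Benedetto's treatment: obtain a fixed point via Proposition~\ref{P417}, re-centre at it (using the constancy of the preimage count in Proposition~3.25 to carry $\wdeg_U$ along), and then read off the dynamics from the power-series coefficients together with Propositions~\ref{P320} and~\ref{LSch}. Your handling of case~(b) is where the real content lies and is done correctly: identifying $d$ as the smallest index with $|c_d|r^d=\sup_i|c_i|r^i=r$, extracting $r\in|\mathbb{C}_v^\times|$ from $r^{d-1}=|c_d|^{-1}$ via divisibility of the value group, ruling out $|c_1|=1$ by the zero-count of $g=f/z$, and replacing the too-coarse bound from Proposition~\ref{P320} with the termwise estimate $|f(x)|\le|x|\cdot\max(|c_1|,|x|/r)$ to get convergence of all orbits.
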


This section has laid the foundational concepts for our study of non-Archimedean fields and rational functions. We have explored the behavior of rational functions near periodic points and introduced propositions that provide crucial conditions for the existence of fixed points. These fundamental ideas will serve as the building blocks for our subsequent analyses.
% subsection $p$-adic numbers\ (end)

% section Basics of $p$-adic dynamics (end)

\section{$\varepsilon$-approximations}
\label{main}

In this section, we focus on the proof of our main theorem. A crucial step in this endeavor is to define rational functions \( h_i \), which serve as ``gluing functions'' for the construction of \( F_\varepsilon \), i.e., for the development of $\varepsilon$-approximations.

\begin{Lemma}
\label{lemah}
Let \( r, \delta \in |\mathbb{C}_v^\times| \) be such that \( 0 < r < \delta \), and define \( s = \sqrt{r \delta} \). Consider the rational function 
\[
h(z) = \frac{1}{1 - \left( \frac{z - a}{c} \right)^M},
\]
for some \( a, c \in \mathbb{C}_v \) with \( |c| = s \) and \( M \geq 1 \). Then, the following properties hold:
\begin{enumerate}
    \item For all \( z \in B_r(a) \), we have \( |h(z)| = 1 \) and
    \[
    |h(z) - 1| = \left| \frac{z - a}{c} \right|^M \leq \left( \frac{r}{\delta} \right)^{M/2} < 1.
    \]
    \item For all \( z \in \mathbb{P}(\mathbb{C}_v) \setminus D_\delta(a) \), we have
    \[
    |h(z)| = \frac{1}{\left| \frac{z - a}{c} \right|^M} \leq \left( \frac{r}{\delta} \right)^{M/2} < 1.
    \]
\end{enumerate}
\end{Lemma}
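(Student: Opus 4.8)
The plan is to verify the two claimed estimates by a direct computation using the ultrametric inequality, exploiting the fact that the single ``bad'' term in the geometric-type expression for $h$ is completely controlled by the location of $z$ relative to the balls $B_r(a)$ and $\mathbb{P}(\mathbb{C}_v)\setminus D_\delta(a)$. Write $w = (z-a)/c$, so that $h(z) = 1/(1-w^M)$, and note that $|c| = s = \sqrt{r\delta}$ gives $|w| = |z-a|/s$.

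For part (1), I would argue as follows. If $z \in B_r(a)$, then $|z-a| \le r$, so $|w| \le r/s = r/\sqrt{r\delta} = \sqrt{r/\delta} < 1$ since $r < \delta$. Hence $|w^M| = |w|^M \le (r/\delta)^{M/2} < 1$. Because the absolute value is non-Archimedean and $|w^M| < 1 = |1|$, we get $|1 - w^M| = \max\{|1|, |w^M|\} = 1$. Therefore $|h(z)| = 1/|1-w^M| = 1$. Moreover $h(z) - 1 = \frac{1}{1-w^M} - 1 = \frac{w^M}{1-w^M}$, so $|h(z)-1| = |w^M|/|1-w^M| = |w^M| = |w|^M$, which is exactly $|(z-a)/c|^M$, and this is $\le (r/\delta)^{M/2} < 1$ as just computed. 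Note this step uses that both $r$ and $\delta$ lie in $|\mathbb{C}_v^\times|$ only to ensure $s \in |\mathbb{C}_v^\times|$ so that a $c$ with $|c| = s$ exists; the estimates themselves are purely metric.

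For part (2), if $z \in \mathbb{P}(\mathbb{C}_v) \setminus D_\delta(a)$ then $|z-a| \ge \delta$ (and in particular $z \ne \infty$ is handled by interpreting $|z-a| = \infty$ there, where $h(z) = 0$ and the bound is trivial), so $|w| = |z-a|/s \ge \delta/\sqrt{r\delta} = \sqrt{\delta/r} > 1$. Thus $|w^M| = |w|^M > 1 = |1|$, and again by the ultrametric property $|1 - w^M| = \max\{1, |w^M|\} = |w^M|$. Consequently $|h(z)| = 1/|1-w^M| = 1/|w^M| = 1/|w|^M = (s/|z-a|)^M \le (s/\delta)^M = (\sqrt{r\delta}/\delta)^M = (r/\delta)^{M/2} < 1$, which is the claimed inequality, and the intermediate expression $1/|(z-a)/c|^M$ matches the statement.

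There is no serious obstacle here; the only points requiring a little care are the bookkeeping at $z = \infty$ in part (2) and making explicit that the strict inequality $r < \delta$ is what forces the two regimes $|w| < 1$ and $|w| > 1$ to be cleanly separated by the intermediate radius $s$, which is precisely why the geometric mean $s = \sqrt{r\delta}$ is the right choice of scaling for $c$. The identity $h(z) - 1 = w^M/(1-w^M)$ and the resulting cancellation $|h(z)-1| = |w^M|$ is the one small algebraic manipulation worth writing out.
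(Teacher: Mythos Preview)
Your proof is correct and follows essentially the same approach as the paper's: both arguments compute $|(z-a)/c|$ in the two regimes, invoke the ultrametric inequality to evaluate $|1-((z-a)/c)^M|$, and use the algebraic identity $h(z)-1 = \frac{((z-a)/c)^M}{1-((z-a)/c)^M}$ for part (1). Your substitution $w=(z-a)/c$, the explicit remark about $z=\infty$, and the comment on why $r,\delta\in|\mathbb{C}_v^\times|$ is needed are minor expository additions but do not change the underlying argument.
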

\begin{proof}
\begin{enumerate}
    \item Consider \( z \in B_r(a) \). Observe that \( |z - a| \leq r \), thus
    \[
    \left| \frac{z - a}{c} \right| = \frac{|z - a|}{|c|} \leq \frac{r}{s} = \frac{r}{\sqrt{r \delta}} = \sqrt{\frac{r}{\delta}} < 1.
    \]
    It follows that \( |h(z)| = 1 \). Additionally, we can deduce the inequality
    \[
    |h(z) - 1| =\left|\frac{\left(\frac{z-a}{c}\right)^M}{1-\left(\frac{z-a}{c}\right)^M}\right|= \left| \frac{z - a}{c} \right|^M \leq \left( \frac{r}{\delta} \right)^{M/2} < 1.
    \]
    
    \item Let \( z \in \mathbb{P}(\mathbb{C}_v) \setminus D_\delta(a) \). Here, \( |z - a| \geq \delta \), hence
    \[
    \left| \frac{z - a}{c} \right| \geq \frac{\delta}{s} = \sqrt{\frac{\delta}{r}} > 1.
    \]
    Thus, we find
    \[
    |h(z)| = \frac{1}{\left|1-\left(\frac{z-a}{c}\right)^M\right|} = \frac{1}{\left| \frac{z - a}{c} \right|^M} \leq \left( \frac{r}{\delta} \right)^{M/2} < 1.
    \]
\end{enumerate}
\end{proof}

\begin{Theorem}[$\varepsilon$-approximation]
\label{T1}
Let \( f_1, \ldots, f_n \in \mathbb{C}_v(z) \) be rational functions, and let \( B_{r_1}(a_1), \ldots, B_{r_n}(a_n) \) be rational balls that are disjoint by pairs. Assume the following:
\begin{enumerate}
    \item \( f_i(B_{r_i}(a_i)) = B_{t_i}(b_i) \) for each \( i \),
    \item If \( B = \bigcup_{i=1}^n B_{r_i}(a_i) \), then \( f_i(B) \subset B_1(0) \) for all \( i = 1, \ldots, n \).
\end{enumerate}
Then, for any \( \varepsilon > 0 \), there exists a rational function \( F_\varepsilon \in \mathbb{C}_v(z) \) such that \( F_\varepsilon(B_{r_i}(a_i)) = B_{t_i}(b_i) \) for each \( i = 1, \ldots, n \). Additionally, we have
\[
|F_\varepsilon(z) - f_i(z)| < \varepsilon
\]
for all \( z \in B_{r_i}(a_i) \) and for all \( i = 1, \ldots, n \).
\end{Theorem}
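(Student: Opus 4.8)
### Proof proposal

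The plan is to build $F_\varepsilon$ as a convex-type combination of the $f_i$ weighted by the "bump" functions $h_i$ from Lemma \ref{lemah}, one $h_i$ localized at each ball $B_{r_i}(a_i)$. Concretely, since the balls are rational and disjoint by pairs, for each $i$ I can pick $\delta_i \in |\mathbb{C}_v^\times|$ with $r_i < \delta_i$ small enough that the enlarged disks $D_{\delta_i}(a_i)$ are still pairwise disjoint, set $s_i = \sqrt{r_i \delta_i}$, choose $c_i$ with $|c_i| = s_i$, and let
\[
h_i(z) = \frac{1}{1 - \left(\frac{z-a_i}{c_i}\right)^{M}},
\]
with a common exponent $M \geq 1$ to be fixed later. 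By Lemma \ref{lemah}, on $B_{r_i}(a_i)$ we have $|h_i| = 1$ and $|h_i - 1| \leq (r_i/\delta_i)^{M/2}$, while for $j \neq i$, since $B_{r_i}(a_i) \subset \mathbb{P}(\mathbb{C}_v)\setminus D_{\delta_j}(a_j)$, we get $|h_j(z)| \leq (r_j/\delta_j)^{M/2}$ there — i.e. $h_i$ is close to $1$ on its own ball and close to $0$ on the others. I would then define
\[
F_\varepsilon(z) = \sum_{i=1}^{n} h_i(z)\, f_i(z),
\]
a rational function, and check it does the job.

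The key steps, in order. First, the estimate on $B_{r_i}(a_i)$: write $F_\varepsilon(z) - f_i(z) = (h_i(z)-1)f_i(z) + \sum_{j\neq i} h_j(z) f_j(z)$. Hypothesis (2) gives $|f_j(z)| \leq 1$ for all $z \in B$ and all $j$ (each $f_j(B)\subset B_1(0)$), so by the ultrametric inequality
\[
|F_\varepsilon(z) - f_i(z)| \leq \max\left\{ |h_i(z)-1|,\ \max_{j\neq i}|h_j(z)| \right\} \leq \max_i (r_i/\delta_i)^{M/2}.
\]
Since $r_i/\delta_i < 1$ for each $i$, choosing $M$ large enough makes this quantity $< \varepsilon$; this handles the approximation claim. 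Second, the image claim: I want $F_\varepsilon(B_{r_i}(a_i)) = B_{t_i}(b_i)$. Here I invoke Lemma \ref{LemaImagenIgual} (the passage from $D$ to $B$ versions is routine, or one works with the corresponding open disks): $f_i(B_{r_i}(a_i)) = B_{t_i}(b_i)$, and if I additionally arrange $M$ large enough that the approximation error is also $< t_i$ for every $i$ — i.e. take $M$ with $\max_i (r_i/\delta_i)^{M/2} < \min\{\varepsilon,\, t_1,\ldots,t_n\}$ — then Lemma \ref{LemaImagenIgual} yields $F_\varepsilon(B_{r_i}(a_i)) = B_{t_i}(b_i)$ directly, and moreover preserves bijectivity of $f_i$ on that ball when present. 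Third, bookkeeping: confirm $F_\varepsilon$ is genuinely a rational function (finite sum of rational functions), that the denominators $1 - ((z-a_i)/c_i)^M$ have no common zero with anything forcing a pole inside the balls (none of them vanish on $B_{r_i}(a_i)$ since $|h_i|=1$ there), and that all choices ($\delta_i$, $c_i$, $M$) are simultaneously realizable — they are, because there are finitely many balls.

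The main obstacle I anticipate is the image identity $F_\varepsilon(B_{r_i}(a_i)) = B_{t_i}(b_i)$ rather than the approximation inequality, which is essentially immediate from the ultrametric property once the $h_i$ are in hand. The subtlety is that Lemmas \ref{Inj} and \ref{LemaImagenIgual} are stated for open disks $D_r(a)$, whereas the theorem is phrased with closed balls $B_r(a)$; I would need to check that the closed-ball analogue holds, using that $r_i \in |\mathbb{C}_v^\times|$ (rational balls) so that a closed ball is a nested union / can be treated via a slightly larger open disk of the same image radius, together with the non-Archimedean fact that the relevant radii lie in the value group. A secondary point to be careful about is ensuring the $h_i$-weights genuinely localize: this needs the enlarged open disks $D_{\delta_i}(a_i)$ to remain pairwise disjoint, which is possible precisely because the original closed balls are disjoint and one can shrink each $\delta_i$ toward $r_i$ within the value group. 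Once these two points are pinned down, a single choice of sufficiently large $M$ makes every required inequality hold at once and the theorem follows.
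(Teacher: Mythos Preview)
Your proposal is correct and follows essentially the same construction as the paper: define $F_\varepsilon=\sum_i h_i f_i$ with the bump functions $h_i$ from Lemma~\ref{lemah}, choose the exponents large enough that the error is below $\min\{\varepsilon,t_1,\ldots,t_n\}$, and use hypothesis~(2) together with the ultrametric inequality to bound $|F_\varepsilon-f_i|$ on $B_{r_i}(a_i)$. The only substantive difference is in the image step: rather than appealing to Lemma~\ref{LemaImagenIgual} and then worrying about its open-disk phrasing, the paper argues directly---first $F_\varepsilon(B_{r_i}(a_i))\subset B_{t_i}(b_i)$ by the same triangle estimate, then equality by observing that for $z$ with $\max_j(r_j/\delta_j)^{M_j/2}<|f_i(z)-b_i|\leq t_i$ one has $|F_\varepsilon(z)-b_i|=|f_i(z)-b_i|$, so every radius up to $t_i$ is attained. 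This sidesteps exactly the open-versus-closed issue you flagged as the main obstacle; your route via the lemma would also work once you check the closed-ball analogue, but the direct argument is cleaner here.
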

\begin{proof}
For each \(i \in \{1, \ldots, n\}\), we define \(\delta_i = \min_{j \neq i} \{|a_i - a_j|\}\). Note that \(\delta_i > r_i\) and \(D_{\delta_i}(a_i) \cap B_{r_j}(a_j) = \emptyset\) for all \(j \neq i\).

Let \(\varepsilon > 0\), and define
\[
\tau = \min\{t_1, \ldots, t_n, \varepsilon\}, \quad s_i = \sqrt{\delta_i r_i}, \quad \text{and} \quad c_i \in \mathbb{C}_v \text{ with } |c_i| = s_i.
\]
Let \(h_i \in \mathbb{C}_v(z)\) be the rational function defined as
\begin{equation}
\label{h}
h_i(z) = \frac{1}{1 - \left(\frac{z - a_i}{c_i}\right)^{M_i}},
\end{equation}
where \(M_i \geq 1\) is large enough such that \(\left(\frac{r_i}{\delta_i}\right)^{M_i/2} < \tau\). Define
\begin{equation}
\label{F}
F_\varepsilon(z) = \sum_{i=1}^n f_i(z) h_i(z).
\end{equation}

Let \(z \in B_{r_i}(a_i)\). Then, by Lemma \ref{lemah},
\[
|f_j(z) h_j(z)| \leq |h_j(z)| \leq \left(\frac{r_j}{\delta_j}\right)^{M_j/2} < \tau,
\]
for all \(j \neq i\), and
\[
|f_i(z)(h_i(z) - 1)| \leq |h_i(z) - 1| \leq \left(\frac{r_i}{\delta_i}\right)^{M_i/2} < \tau.
\]
Thus,
\begin{align*}
|F_\varepsilon(z) - f_i(z)| &= \left|\sum_{j \neq i} h_j(z) f_j(z) + f_i(z)(h_i(z) - 1)\right| \\
&< \tau \leq \varepsilon.
\end{align*}
On the other hand, if \(z \in B_{r_i}(a_i)\), then \(|f_i(z) - b_i| \leq t_i\), which implies
\begin{align*}
|F_\varepsilon(z) - b_i| &= \left|f_i(z)(h_i(z) - 1) + f_i(z) - b_i + \sum_{j \neq i} h_j(z) f_j(z)\right| \\
&\leq\max\left\{\left(\frac{r_1}{\delta_1}\right)^{M_1/2},\ldots,\left(\frac{r_n}{\delta_n}\right)^{M_n/2},|f_i(z)-b_i|\right\}\\
&\leq t_i,
\end{align*}
so \(F_\varepsilon(B_{r_i}(a_i)) \subset B_{t_i}(b_i)\).

Finally, since $$\max_{1\leq j\leq n}\left(\frac{r_j}{\delta_j}\right)^{M_j/2}<\tau\leq t_i\quad\text{and}\quad f_i(B_{r_i}(a_i)) = B_{t_i}(b_i),$$then there exists points $z\in B_{r_i}(a_i)$ such that $$\max_{1\leq j\leq n}\left(\frac{r_j}{\delta_j}\right)^{M_j/2}<|f_i(z)-b_i|\leq t_i,$$and hence, for these points we have $|F_\varepsilon(z)-b_i|=|f_i(z)-b_i|$. This implies that $$F_\varepsilon(B_{r_i}(a_i))=B_{t_i}(b_i),$$ for all \(i = 1, \ldots, n\).
\end{proof}
\begin{Remark}
\label{Rem} 
We have the following observations:
    \begin{enumerate}
        \item \(M_i\) is dependent on \(\tau\), and thus indirectly on \(\varepsilon\). Specifically, as \(\varepsilon\) decreases, \(M_i\) generally increases. This is to emphasize that there is a trade-off between the approximation error and the computational complexity of \(F_\varepsilon\).
        
        \item For \(\varepsilon,\varepsilon' > 0\) and the corresponding functions \(F_\varepsilon\) and \(F_{\varepsilon'}\) as in Theorem \ref{T1}, if \(\varepsilon' < \varepsilon\) then \(F_{\varepsilon'}\) satisfies the conditions of the theorem for \(\varepsilon\). This can be viewed as a kind of ``monotonicity'' property with respect to \(\varepsilon\).
        
        \item By Lemma \ref{LemaImagenIgual}, if \(f_i(D_r(x)) = D_t(y)\) with \(t > \varepsilon\), then \(F_\varepsilon(D_r(x)) = D_t(y)\) for all \(D_r(x) \subset B_{r_i}(a_i)\). This point is included to highlight the robustness of \(F_\varepsilon\) in approximating different types of disks within the rational balls \(B_{r_i}(a_i)\).
    \end{enumerate}
\end{Remark}

The functions \( h_i(z) \) serve a pivotal role akin to a partition of unity, albeit not adhering to its conventional structure. These functions act as interpolative links, which enable the unification of disparate local dynamical systems \( f_i(z) \) into an integrated global system \( F_\varepsilon(z) \). We call this integration process \(\varepsilon\)-approximation. Employing \( h_i(z) \) provides a methodology to rationally meld various local behaviors, ensuring that the global system \( F_\varepsilon(z) \) inherits essential dynamical characteristics, such as fixed points—be they attracting, repelling, or neutral—within their respective domains. Hence, the functions \( h_i(z) \) may be regarded as a non-Archimedean counterpart to a splicing mechanism or a partition of unity designed for the rational approximation and amalgamation of dynamical systems.

The following three corollaries imply the existence of indifferent, attracting, and repelling fixed points of \( F_\varepsilon \).

\begin{Corollary}
\label{C1}
Let \( f_1,\ldots,f_n \in \mathbb{C}_v(z) \), \( B_{r_1}(a_1),\ldots,B_{r_n}(a_n) \) as in Theorem \ref{T1}. If \( f_i \) has an attracting fixed point in \( B_{r_i}(a_i) \), then we can choose \( F_\epsilon(z) \) such that it has an attracting fixed point in \( B_{r_i}(a_i) \).
\end{Corollary}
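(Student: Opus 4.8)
The plan is to locate the attracting fixed point of $F_\varepsilon$ inside the \emph{same small disk} that $f_i$ uses, and to do so by choosing $\varepsilon$ small enough that $F_\varepsilon$ and $f_i$ are indistinguishable on that disk for the purposes of Proposition \ref{T418}. Concretely, suppose $\alpha \in B_{r_i}(a_i)$ is an attracting fixed point of $f_i$. By Proposition 4.3(a) (or directly by Proposition \ref{T418}, since $|f_i'(\alpha)|<1$) there is a rational open disk $U = D_\rho(\alpha) \subset B_{r_i}(a_i)$ with $f_i(U) \subsetneq U$; by shrinking $\rho$ I may further assume $f_i(U) = D_{s}(\alpha)$ with $s < \rho$ and $s \in |\mathbb{C}_v^\times|$, and moreover that $\wdeg_U(f_i(z)-\alpha) = 1$ is not needed — case (a) of Proposition \ref{T418} only requires $f_i(U)\subsetneq U$ with finite Weierstrass degree. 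So I would first fix such a $U$ and record the data $\rho, s$, and the fact that $f_i(U) = D_s(\alpha)$ with $s<\rho$.

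Next I would invoke Lemma \ref{LemaImagenIgual} with the pair $(f_i, F_\varepsilon)$ on the disk $U$: choosing $\varepsilon < s$ (which is legitimate since $\varepsilon$ is ours to pick, and $s>0$), the theorem gives $|F_\varepsilon(z) - f_i(z)| < \varepsilon \le s$ for all $z \in U \subset B_{r_i}(a_i)$, and hence by Lemma \ref{LemaImagenIgual} we get $F_\varepsilon(U) = D_s(\alpha)$ as well, with the same (finite) Weierstrass degree behaviour — in particular $F_\varepsilon(U) = D_s(\alpha) \subsetneq D_\rho(\alpha) = U$. Then Proposition \ref{T418}(a) applies verbatim to $F_\varepsilon$ on $U$: there is a unique attracting fixed point $b \in U \subset B_{r_i}(a_i)$, and in fact $F_\varepsilon^n(x) \to b$ for every $x \in U$. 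This gives exactly the claimed attracting fixed point of $F_\varepsilon$ in $B_{r_i}(a_i)$.

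One bookkeeping point must be handled: to run this argument simultaneously for all $i$ with attracting fixed points, and to keep it compatible with the other corollaries, I would take $\varepsilon$ to be the minimum over $i$ of the relevant radii $s_i$ (and smaller than $\min_i t_i$, which Theorem \ref{T1} already effectively imposes via $\tau$). Since there are only finitely many $i$, this minimum is still a positive element we may feed into Theorem \ref{T1}. Also one should note that $\wdeg_U(F_\varepsilon(z)-\alpha)$ is finite because $F_\varepsilon$ is a rational function and $F_\varepsilon(U)$ is a genuine disk (not all of $\mathbb{P}(\mathbb{C}_v)$), which is guaranteed by $F_\varepsilon(U) = D_s(\alpha)$ from Lemma \ref{LemaImagenIgual}.

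The main obstacle is really just the correct choice of the auxiliary disk $U$ and the verification that Lemma \ref{LemaImagenIgual} is applicable there with a threshold $t = s$ strictly below the radius of $f_i(U)$ — i.e. making sure the strict inequality $f_i(U) \subsetneq U$ survives the perturbation. This is why I pass to a disk on which $f_i$ strictly contracts: the gap $\rho - s > 0$ (in the valuation sense, $s/\rho < 1$) is precisely the slack that absorbs the error $\varepsilon$. No quantitative estimate on $|F_\varepsilon'|$ is needed; the whole argument is topological/combinatorial via the image-equality lemma and Proposition \ref{T418}, which is what makes it clean.
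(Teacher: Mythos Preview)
Your proposal is correct and follows essentially the same route as the paper: pick a small disk $U=D_\rho(\alpha)$ on which $f_i$ strictly contracts to $D_s(\alpha)$, choose $\varepsilon<s$, transfer the strict contraction to $F_\varepsilon$, and invoke Proposition~\ref{T418}(a). The only cosmetic difference is that the paper uses the ultrametric triangle inequality directly to get $F_\varepsilon(U)\subset D_s(\alpha)\subsetneq U$, whereas you route through Lemma~\ref{LemaImagenIgual} to obtain the equality $F_\varepsilon(U)=D_s(\alpha)$; either suffices for Proposition~\ref{T418}(a).
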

\begin{proof}
Without loss of generality, let us assume that \( a_i \) is the attracting fixed point of \( f_i \). Then there exist \( 0<t_i'<r_i' \leq r_i \) such that \( f_i(D_{r_i'}(a_i)) = D_{t_i'}(a_i) \). 

Let us choose \( \varepsilon > 0 \) such that \( \varepsilon < t_i' \) and let us define \( F_\varepsilon \) as in ~\eqref{F} in the proof of Theorem \ref{T1}. 

By Theorem \ref{T1}, \( |F_\varepsilon(z) - f_i(z)| < \varepsilon \) for all \( z \in D_{r_i'}(a_i) \subset B_{r_i}(a_i) \), and hence if \( z \in D_{r_i'}(a_i) \), then
\[
|F_\varepsilon(z) - a_i| \leq \max\{|F_\varepsilon(z) - f_i(z)|, |f_i(z) - a_i|\} < t_i',
\]
which implies that \( F(D_{r_i'}(a_i)) \subset D_{t_i'}(a_i) \subsetneq D_{r_i'}(a_i) \).

By (a) in Proposition \ref{T418}, \( F_\varepsilon \) has a unique attracting fixed point in \( D_{r_i'}(a_i) \). Hence \( F_\varepsilon \) has an attracting fixed point in \( B_{r_i}(a_i) \).
\end{proof}

\begin{Corollary}
\label{C2}
Let \( f_1, \ldots, f_n \in \mathbb{C}_v(z) \), and let \( B_{r_1}(a_1), \ldots, B_{r_n}(a_n) \) be as defined in Theorem \ref{T1}. If \( f_i \) has a repelling fixed point in \( B_{r_i}(a_i) \), then \( F_\varepsilon(z) \) can also be chosen to have a repelling fixed point in \( B_{r_i}(a_i) \).
\end{Corollary}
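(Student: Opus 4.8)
The plan is to mirror the strategy of Corollary \ref{C1}, but to use the repelling case of Proposition \ref{T418} (item (c)) and of Proposition \ref{P417} (item (c)) in place of the attracting case. First I would normalize: without loss of generality assume the repelling fixed point of $f_i$ is $a_i$ itself (conjugate by a translation, which preserves the hypotheses of Theorem \ref{T1}). Since $a_i$ is a repelling fixed point, $|f_i'(a_i)| > 1$, so there is a radius $0 < r_i' \leq r_i$ on which $f_i$ behaves linearly: by Proposition \ref{P320} and Lemma \ref{Inj}, choosing $r_i'$ small enough we get $f_i(D_{r_i'}(a_i)) = D_{s_i'}(a_i)$ with $s_i' = |f_i'(a_i)|\, r_i' > r_i'$, and $f_i$ is bijective on $D_{r_i'}(a_i)$ with $\wdeg = 1$. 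Here I should shrink $r_i'$ if needed so that $s_i' = |f_i'(a_i)|\,r_i'$ is still small enough that $D_{s_i'}(a_i)$ does not escape the region where the estimates apply; this is routine.

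Next I would pick $\varepsilon > 0$ small — specifically $\varepsilon < r_i'$, and also small enough that $\varepsilon < s_i'$ — and define $F_\varepsilon$ as in \eqref{F}. By Theorem \ref{T1} (and Remark \ref{Rem}(3), invoking Lemma \ref{LemaImagenIgual}, since $r_i' > \varepsilon$), we get $F_\varepsilon(D_{r_i'}(a_i)) = D_{s_i'}(a_i) \supsetneq D_{r_i'}(a_i)$, and moreover $F_\varepsilon$ is bijective on $D_{r_i'}(a_i)$, so $\wdeg_{D_{r_i'}(a_i)}(F_\varepsilon(z) - a_i) = 1$. These are precisely hypotheses (c) of Proposition \ref{T418}: $F_\varepsilon(U) \supsetneq U$ with $d = 1$. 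Therefore $D_{r_i'}(a_i)$ contains a unique repelling fixed point $b$ of $F_\varepsilon$, and hence $F_\varepsilon$ has a repelling fixed point in $B_{r_i}(a_i)$.

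The one point that needs a little care — and which I expect to be the main obstacle — is verifying that $F_\varepsilon$ is genuinely \emph{bijective} on $D_{r_i'}(a_i)$, since Proposition \ref{T418}(c) needs $d=1$, not merely $F_\varepsilon(U)\supsetneq U$. This follows from the second assertion of Lemma \ref{LemaImagenIgual}: $f_i$ is bijective on $D_{r_i'}(a_i)$ and $|F_\varepsilon(z) - f_i(z)| < \varepsilon < s_i'$ for all $z \in D_{r_i'}(a_i)$, so $F_\varepsilon$ inherits bijectivity. Alternatively one can argue directly with Lemma \ref{Inj}: pick $x, y \in D_{r_i'}(a_i)$ with $|f_i(x) - f_i(y)| = \frac{s_i'}{r_i'}|x-y|$ and $|x - y|$ large enough (of order $r_i'$) that the $\varepsilon$-perturbation cannot change this maximal-modulus estimate, giving $|F_\varepsilon(x) - F_\varepsilon(y)| = \frac{s_i'}{r_i'}|x-y|$, hence $F_\varepsilon$ is bijective on $D_{r_i'}(a_i)$. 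Everything else is a direct transcription of the Corollary \ref{C1} argument with the inclusions reversed, so the write-up should be short.
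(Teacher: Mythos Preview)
Your proposal is correct and follows essentially the same route as the paper: normalize so that $a_i$ is the repelling fixed point, find a small disk $D_{r_i'}(a_i)$ on which $f_i$ is a bijection onto a strictly larger disk $D_{s_i'}(a_i)$, take $\varepsilon$ below the image radius, transfer both the image equality and the bijectivity to $F_\varepsilon$, and conclude via Proposition~\ref{T418}(c). The only difference is packaging --- you invoke Lemma~\ref{LemaImagenIgual} (equivalently Remark~\ref{Rem}(3)) to get the image and bijectivity in one stroke, whereas the paper redoes that computation by hand using the ultrametric inequality and Lemma~\ref{Inj}; one cosmetic slip: the hypothesis you need from Remark~\ref{Rem}(3)/Lemma~\ref{LemaImagenIgual} is that the \emph{image} radius $s_i'$ exceeds $\varepsilon$, not $r_i'$, though you have arranged both anyway.
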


\begin{proof}
We begin by assuming, without loss of generality, that \( a_i \) is a repelling fixed point of \( f_i \). Then there exists a \( 0 < r_i' \leq r_i \) such that \( f : D_{r_i'}(a_i) \to D_{t_i'}(a_i) \) is bijective, with \( t_i',r_i' \in |\mathbb{C}_v^\times| \) and \( t_i' > r_i' \).

Next, we choose \( \varepsilon > 0 \) satisfying \( \varepsilon < t_i' \), and we define \( F_\varepsilon \) as in equation~\eqref{F} found in the proof of Theorem \ref{T1}.

Recall from Theorem \ref{T1} that for all \( z \in B_{r_i}(a_i) \), \( |F_\varepsilon(z) - f_i(z)| < \varepsilon \). Consequently, for \( z \in D_{r_i'}(a_i) \),
\[
|F_\varepsilon(z) - a_i| = |F_\varepsilon(z) - f_i(z) + f_i(z) - a_i| < t_i',
\]
from which \( F_\varepsilon(D_{r_i'}(a_i)) \subset D_{t_i'}(a_i) \) follows.

Due to \( \varepsilon < t_i' \), there exist points \( z \in D_{r_i'}(a_i) \) for which \[\varepsilon < |f_i(z) - a_i| < t_i' .\] For these points, \( |F_\varepsilon(z) - a_i| = |f_i(z) - a_i| \), ensuring \( F_\varepsilon(D_{r_i'}(a_i)) = D_{t_i'}(a_i) \) which properly contains \( D_{r_i'}(a_i) \).

Now consider Proposition \ref{P320}, which tells us that
\[
|f_i(x) - f_i(y)| = \frac{t_i'}{r_i'} \cdot |x - y|,
\]
for all \( x, y \in D_{r_i'}(a_i) \). Hence, if $x,y\in D_{r_i'}(a_i)$ are such that $\varepsilon<|f_i(x)-f_i(y)|<t_i'$, then 
\begin{align*}
    |F_\varepsilon(x)-F_\varepsilon(y)|&=|F_\varepsilon(x)-f_i(x)+f_i(x)-f_i(y)+f_i(y)-F_\varepsilon(y)|\\
    &=|f_i(x)-f_i(y)|=\frac{t_i'}{r_i'}\cdot|x-y|.
\end{align*}
and therefore, by Lemma \ref{Inj}, $\wdeg_{D_{r_i'}(a_i)}(F_\varepsilon(z)-a_i)=1$.

Finally, we appeal to Proposition \ref{T418} part (c), which assures that \( F_\varepsilon \) will have a unique repelling fixed point in \( D_{r_i'}(a_i) \), thereby establishing that \( F_\varepsilon \) also has a repelling fixed point in \( B_{r_i}(a_i) \).
\end{proof}

\begin{Corollary}
\label{C3}
Let \( f_1, \ldots, f_n \in \mathbb{C}_v(z) \), rational functions, be defined over \( B_{r_1}(a_1), \ldots, B_{r_n}(a_n) \) with images \( B_{t_1}(b_1), \ldots, B_{t_n}(b_n) \) respectively, as stated in Theorem \ref{T1}. Assume the following conditions:
\begin{itemize}
    \item \( a_i \) is an indifferent fixed point of \( f_i \),
    \item \( |f_i'(a_i) - 1| = 1 \), and
    \item \( |f_j'(a_i)| < \frac{1}{\min\{ t_1, \ldots, t_n \}} \) for all \( j \neq i \).
\end{itemize}
Then, it follows that we can choose \( F_\varepsilon(z) \) such that it has an indifferent fixed point in \( B_{r_i}(a_i) \).
\end{Corollary}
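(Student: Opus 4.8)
The plan is to mimic the strategy of Corollaries \ref{C1} and \ref{C2}: shrink to a small disk around $a_i$, show that $F_\varepsilon$ maps this disk to itself, and then invoke Proposition \ref{P417}(c) or Proposition \ref{T418}(d) to conclude there is an indifferent fixed point. The crucial extra ingredient here, compared to the attracting and repelling cases, is that an indifferent fixed point is only guaranteed when the map $F_\varepsilon$ restricted to the relevant disk is \emph{bijective onto itself with the disk fixed}, so I must be careful to preserve both the equality of images and the Weierstrass degree $1$ condition simultaneously, and additionally verify $|F_\varepsilon'(a_i)-1|=1$ so that Proposition \ref{P417}(c) applies.

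First I would choose, using that $a_i$ is an indifferent fixed point of $f_i$ with $|f_i'(a_i)-1|=1$, a rational radius $r_i'\le r_i$ (with $r_i'\in|\mathbb{C}_v^\times|$) small enough that $f_i(D_{r_i'}(a_i))=D_{r_i'}(a_i)$ and $f_i:D_{r_i'}(a_i)\to D_{r_i'}(a_i)$ is bijective; such a disk exists by Proposition \ref{T418}(d) together with Schwartz's Lemma (Proposition \ref{LSch}), since indifference forces $|f_i'(z)|=1$ on a small enough disk. Shrinking $r_i'$ further if necessary, I would also arrange (using continuity of the derivative in the non-Archimedean sense, i.e. that $|f_i'(z)-f_i'(a_i)|$ is small, and $|f_j'(z)-f_j'(a_i)|$ small, on $D_{r_i'}(a_i)$) that $|f_j(z)-f_j(a_i)|\le |f_j'(a_i)|\cdot r_i'$ is controlled for $j\ne i$; the hypothesis $|f_j'(a_i)|<1/\min\{t_1,\dots,t_n\}$ is exactly what is needed so that after choosing $\varepsilon$ appropriately these perturbation terms will not exceed $r_i'$.

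Next I would pick $\varepsilon>0$ small — specifically $\varepsilon<r_i'$ and small enough relative to the quantities above — and define $F_\varepsilon$ as in \eqref{F}. By Theorem \ref{T1}, $|F_\varepsilon(z)-f_i(z)|<\varepsilon<r_i'$ for all $z\in D_{r_i'}(a_i)$, so the ultrametric inequality gives $|F_\varepsilon(z)-a_i|\le\max\{|F_\varepsilon(z)-f_i(z)|,|f_i(z)-a_i|\}<r_i'$, i.e. $F_\varepsilon(D_{r_i'}(a_i))\subset D_{r_i'}(a_i)$. For the reverse inclusion, since $f_i(D_{r_i'}(a_i))=D_{r_i'}(a_i)$ there are points $z$ with $\varepsilon<|f_i(z)-a_i|<r_i'$, and for those $|F_\varepsilon(z)-a_i|=|f_i(z)-a_i|$; this forces $F_\varepsilon(D_{r_i'}(a_i))=D_{r_i'}(a_i)$ (alternatively, invoke Lemma \ref{LemaImagenIgual} with $t=\varepsilon<r_i'=s$). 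Then, as in Corollary \ref{C2}, using Proposition \ref{P320} applied to $f_i$ on $D_{r_i'}(a_i)$ (where equality holds since $\wdeg=1$, ratio $r_i'/r_i'=1$) together with the estimate on $|F_\varepsilon-f_i|$, for $x,y$ with $\varepsilon<|f_i(x)-f_i(y)|<r_i'$ one gets $|F_\varepsilon(x)-F_\varepsilon(y)|=|f_i(x)-f_i(y)|=|x-y|$, so Lemma \ref{Inj} yields $\wdeg_{D_{r_i'}(a_i)}(F_\varepsilon(z)-a_i)=1$; hence $F_\varepsilon:D_{r_i'}(a_i)\to D_{r_i'}(a_i)$ is bijective.

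Finally, I would check the derivative condition. Writing $F_\varepsilon=f_ih_i+\sum_{j\ne i}f_jh_j$ and differentiating, at $z=a_i$ one has $h_i(a_i)=1$, $h_i'(a_i)=0$ (since $h_i(z)-1$ vanishes to order $M_i\ge1$ at $a_i$, in fact $M_i\ge2$ can be assumed), and $h_j(a_i)$, $h_j'(a_i)$ are small by Lemma \ref{lemah}; so $F_\varepsilon'(a_i)=f_i'(a_i)+(\text{terms of small absolute value})$, which forces $|F_\varepsilon'(a_i)-1|=|f_i'(a_i)-1|=1$ by the ultrametric inequality, provided $M_i\ge2$ and $\varepsilon$ (hence the perturbation) is small enough; here the hypothesis $|f_j'(a_i)|<1/\min\{t_k\}$ is used to bound $|f_j(a_i)h_j'(a_i)+f_j'(a_i)h_j(a_i)|$. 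With $F_\varepsilon(D_{r_i'}(a_i))=D_{r_i'}(a_i)$, $\wdeg=1$, and $|F_\varepsilon'(a_i)-1|=1$, Proposition \ref{P417}(c) guarantees a fixed point $x_0\in D_{r_i'}(a_i)$, and since $F_\varepsilon$ is bijective on this disk onto itself, Proposition \ref{T418}(d) (or Proposition \ref{LSch} giving $|F_\varepsilon'(x_0)|=1$) shows $x_0$ is indifferent. Therefore $F_\varepsilon$ has an indifferent fixed point in $B_{r_i}(a_i)$.

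I expect the main obstacle to be the bookkeeping in the last step: making the choices of $r_i'$, $M_i$, and $\varepsilon$ compatible so that simultaneously (i) the image equality holds, (ii) the Weierstrass degree stays $1$, and (iii) $|F_\varepsilon'(a_i)-1|=1$ — in particular ensuring $h_i'(a_i)=0$ (which needs $M_i\ge2$, a harmless strengthening of the theorem's $M_i\ge1$) and controlling $h_j'(a_i)$ for $j\ne i$, where the third hypothesis $|f_j'(a_i)|<1/\min\{t_1,\dots,t_n\}$ enters. Everything else is a direct adaptation of Corollaries \ref{C1} and \ref{C2}.
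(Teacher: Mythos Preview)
Your proposal is correct and follows essentially the same route as the paper's own proof: choose a small disk $D_{r_i'}(a_i)$ on which $f_i$ is an isometric bijection, take $\varepsilon$ small (and $M_i\ge 2$ so that $h_i'(a_i)=0$), verify $F_\varepsilon(D_{r_i'}(a_i))=D_{r_i'}(a_i)$ bijectively via the $|F_\varepsilon-f_i|<\varepsilon$ estimate, and then compute $|F_\varepsilon'(a_i)-1|=|f_i'(a_i)-1|=1$ using Lemma~\ref{lemah} and the hypothesis on $|f_j'(a_i)|$, concluding with Propositions~\ref{P417}(c) and~\ref{T418}(d). The only superfluous step in your outline is the proposed shrinking of $r_i'$ to control $|f_j'(z)-f_j'(a_i)|$ on the disk; the derivative computation is only needed at the single point $z=a_i$, so no such uniform control is required (the paper instead imposes $\varepsilon<\min_j\delta_j$ to make the bound $|h_j'(a_i)|<1$ go through).
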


\begin{proof}
We begin by noting that since \( a_i \) is an indifferent fixed point, there exists \( 0 < r_i' \leq r_i \) such that the function \( f_i: D_{r_i'}(a_i) \to D_{r_i'}(a_i) \) is bijective. Furthermore, by Proposition \ref{LSch}, we have \( |f_i(x) - f_i(y)| = |x - y| \) for all \( x, y \in D_{r_i'}(a_i) \).

Next, let us define \( \delta_1, \ldots, \delta_n \) as in the proof of Theorem \ref{T1}. Choose \( 0 < \varepsilon < \min\{ \delta_1, \ldots, \delta_n, r_i' \} \) and define \( \tau \) and \( F_\varepsilon \) following the proof of Theorem \ref{T1}. We take \( \varepsilon \) sufficiently small to ensure \( M_i > 1 \).

By Theorem \ref{T1}, for all \( z \in B_{r_i}(a_i) \), we have \( |F_\varepsilon(z) - f_i(z)| < \varepsilon \). In particular, for \( z \in D_{r_i'}(a_i) \subset B_{r_i}(a_i) \),
\[
|F_\varepsilon(z) - a_i| =|F_\varepsilon(z)-f_i(z)+f_i(z)-a_i|< r_i',
\]
which implies \( F_\varepsilon(D_{r_i'}(a_i)) \subset D_{r_i'}(a_i) \).

Additionally, if \( x, y \in D_{r_i'}(a_i) \) satisfy \( \varepsilon < |f_i(x) - f_i(y)| < r_i' \), then
\begin{align*}
    |F_\varepsilon(x) - F_\varepsilon(y)| &= |F_\varepsilon(x)-f_i(x)+f_i(x)-f_i(y)+f_i(y) - F_\varepsilon(y)|\\
    &=|f_i(x)-f_i(y)|\\
    &=|x - y|
\end{align*}
%\[
%|F_\varepsilon(x) - F_\varepsilon(y)| =|x - y|,
%\]
which implies \( F_\varepsilon(D_{r_i'}(a_i)) = D_{r_i'}(a_i) \). Moreover, \( F_\varepsilon: D_{r_i'}(a_i) \to D_{r_i'}(a_i) \) is bijective by Lemma \ref{LSch}.

On the other hand, let $h_1,\ldots,h_n\in\C(z)$ be rational functions defined as in \eqref{h}, in proof of Theorem \ref{T1}. Note that $$h_j'(z)=\frac{M_j\left(\frac{z-a_j}{c_j}\right)^{M_j-1}\cdot\frac{1}{c_j}}{\left[1-\left(\frac{z-a_j}{c_j}\right)^{M_j}\right]^2},$$for all $j=1,\ldots,n$. Therefore, if $j\neq i$ then $|a_i-a_j|\geq\delta_j$ which implies that $\left|\frac{a_i-a_j}{c_j}\right|\geq\frac{\delta_j}{s_j}=\sqrt{\frac{\delta_j}{r_j}}>1$, and hence
$$|h_j'(a_i)|=\frac{|M_j|}{|c_j|}\cdot\frac{1}{|\frac{a_i-a_j}{c_j}|^{M_j+1}}\leq\frac{1}{s_j}\cdot\left(\frac{r_j}{\delta_j}\right)^{\frac{M_j+1}{2}}=\frac{1}{\delta_j}\cdot\left(\frac{r_j}{\delta_j}\right)^{\frac{M_j}{2}}<1,$$since $\left(\frac{r_j}{\delta_j}\right)^{\frac{M_j}{2}}<\tau\leq\varepsilon<\delta_j$

Note that
\begin{itemize}
    \item $h_i(a_i)=1$,
    \item since $M_i>1$, then $h_i'(a_i)=0$,
    \item $|h_j'(a_i)f_j(a_i)|\leq|h_j'(a_i)|<1$ for all $j\neq i$,
    \item by (2) in Lemma \ref{h} and by hypothesis in this corollary, $$|h_j(a_i)f_j'(a_i)|\leq\left(\frac{r_j}{\delta_j}\right)^{M_j/2}|f_j'(a_i)|<\frac{\tau}{\min\{t_1,\ldots,t_n\}}\leq1,$$ for all $j\neq i$.
\end{itemize}
Therefore
\begin{align*}
|&F_\varepsilon'(a_i)-1|= \left|\sum_{j=1}^nh_j'(a_i)f_j(a_i)+h_j(a_i)f'_j(a_i)-1\right| \\
&= \left|h_i'(a_i)f_i(a_i)+h_i(a_i)f'_i(a_i)-1+\sum_{\stackrel{j=1}{j\neq i}}^n h_j'(a_i)f_j(a_i)+h_j(a_i)f'_j(a_i)\right| \\
&= \left|f'_i(a_i)-1+\sum_{\stackrel{j=1}{j\neq i}}^n h_j'(a_i)f_j(a_i)+h_j(a_i)f'_j(a_i)\right| \\
&= |f'_i(a_i)-1| = 1.
\end{align*}
% Omitted intermediate steps for brevity.
By Proposition \ref{P417}, \( F_\varepsilon \) possesses a fixed point in \( D_{r_i'}(a_i) \), and by Proposition \ref{T418}, this fixed point is indifferent. Therefore, \( F_\varepsilon \) has an indifferent fixed point in \( B_{r_i}(a_i) \).
\end{proof}

\begin{Corollary}
    \label{C4}
    Let $f_1,\ldots,f_n\in\mathbb{C}_v(z)$, $B_{r_1}(a_1),\ldots,B_{r_n}(a_n)$ as in Theorem \ref{T1}. Let us assume that for each $i\in\{1,\ldots,n\}$, $f_i$ has $n_i$ attracting, $m_i$ repelling and $l_i$ indifferent fixed points in $B_{r_i}(a_i)$, with $n_i,m_i,l_i\geq0$. Then we can choose $\varepsilon>0$ such that $F_\varepsilon$ has $n_i$ attracting and $m_i$ repelling fixed points in $B_{r_i}(a_i)$ for all $i=1,\ldots,n$. Moreover, if for each indifferent fixed point of $f_i$ in $B_{r_i}(a_i)$ the hypothesis of Corollary \ref{C3} hold, then $F_\varepsilon$ also has $l_i$ indifferent fixed points in $B_{r_i}(a_i)$ for all $i=1,\ldots,n$.
\end{Corollary}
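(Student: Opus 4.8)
\textbf{Proof proposal for Corollary \ref{C4}.}

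The plan is to combine the previous three corollaries in a single choice of $\varepsilon$. For each $i$ and each attracting fixed point of $f_i$ in $B_{r_i}(a_i)$, the argument in Corollary \ref{C1} produces a radius $t'$ (the radius of the image disk of a small disk around that fixed point on which $f_i$ is contracting), and the conclusion holds provided $\varepsilon$ is smaller than that $t'$; similarly each repelling fixed point yields a threshold $t'$ from Corollary \ref{C2}, and each indifferent fixed point satisfying the hypotheses of Corollary \ref{C3} yields a threshold $\min\{r',\delta_1,\dots,\delta_n\}$ from that proof. Since there are finitely many fixed points in total across all the balls (each $f_i$ is rational, hence has finitely many fixed points), I would let $\varepsilon$ be the minimum of all these finitely many positive thresholds, together with $\min\{t_1,\dots,t_n\}$, and take $\varepsilon$ small enough that every $M_i$ forced in the indifferent case satisfies $M_i>1$. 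With this single $\varepsilon$, the function $F_\varepsilon$ of Theorem \ref{T1} simultaneously satisfies the conclusions of Corollaries \ref{C1}, \ref{C2}, and (under the extra hypotheses) \ref{C3} at every fixed point of every $f_i$.

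Next I would upgrade ``has at least one'' to ``has exactly $n_i$ attracting, $m_i$ repelling'' by a counting/localization argument. The key point is that distinct fixed points of $f_i$ in $B_{r_i}(a_i)$ can be separated: choosing the radii $r_i'$ in the proofs of Corollaries \ref{C1}--\ref{C3} small enough, the disks $D_{r_i'}(\cdot)$ around the various fixed points of $f_i$ are pairwise disjoint (and contained in $B_{r_i}(a_i)$), and each such disk is mapped by $F_\varepsilon$ into (or onto, in the repelling/indifferent case) itself in the manner dictated by the relevant corollary. By Proposition \ref{T418} each of these disks contains exactly one fixed point of $F_\varepsilon$, of the prescribed type (attracting in case (a), repelling in case (c), indifferent in case (d)). This gives at least $n_i$ attracting, $m_i$ repelling, and $l_i$ indifferent fixed points of $F_\varepsilon$ in $B_{r_i}(a_i)$, located in disjoint disks.

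To see there are no others, I would use the global count of fixed points. A rational map of degree $d$ on $\mathbb{P}(\mathbb{C}_v)$ has exactly $d+1$ fixed points with multiplicity; I would pick $\varepsilon$ additionally so small that $\deg F_\varepsilon$ equals the generic value forced by the construction \eqref{F} and so that $F_\varepsilon$ has no fixed points outside $\bigcup_i B_{r_i}(a_i)$ other than those already accounted for — more precisely, I would argue that on $\bigcup_i B_{r_i}(a_i)$ the total number of fixed points of $F_\varepsilon$, counted with multiplicity via the Weierstrass degree of $F_\varepsilon(z)-z$ on each separating disk, cannot exceed the total number of fixed points of the $f_i$ in those balls, since $F_\varepsilon$ is a uniform $\varepsilon$-approximation of $f_i$ there and $\varepsilon$ is below the relevant separation scale (invoking Lemma \ref{LemaImagenIgual} and Proposition \ref{P320} to control $\wdeg(F_\varepsilon(z)-z)$ disk by disk, exactly as in the proofs of Corollaries \ref{C2} and \ref{C3}). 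Hence the counts are exactly $n_i$, $m_i$, and $l_i$.

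The main obstacle I anticipate is precisely this last ``no extra fixed points'' step: the corollaries as stated only produce fixed points, and controlling the total fixed-point count of $F_\varepsilon$ — both inside each ball (ruling out extra fixed points hiding between the separating disks) and, if one wants the indifferent count to be exact, ruling out an attracting or repelling point degenerating into an indifferent one — requires a careful choice of all the auxiliary radii $r_i'$ simultaneously and a uniform-approximation argument that the Weierstrass degree of $F_\varepsilon(z)-z$ on each ball matches that of $f_i(z)-z$. I would handle this by first fixing, for every $i$, pairwise-disjoint disks around the fixed points of $f_i$ on whose union $f_i(z)-z$ has no further zeros, then choosing $\varepsilon$ smaller than the infimum of $|f_i(z)-z|$ over the (compact-in-spirit) complement of those disks inside $B_{r_i}(a_i)$, so that $F_\varepsilon(z)-z$ likewise has no zero there.
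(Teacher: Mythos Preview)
Your first two paragraphs are essentially the paper's own proof: enumerate the finitely many fixed points of each $f_i$ in $B_{r_i}(a_i)$, separate them by a single small radius $r$ so that the disks $D_r(b_{i,j})$ are pairwise disjoint and each carries the expected behavior (strictly contracting, strictly expanding and bijective, or bijective onto itself), take $\varepsilon$ below the minimum of all the image-radii $t_{i,j}$ and the $\delta_i$, and then apply Proposition~\ref{T418} disk by disk. The paper organizes this slightly more cleanly by fixing a common $r$ and listing the three cases, but the content is the same.

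Where you diverge is in your last two paragraphs. The paper does \emph{not} prove, and the statement does not assert, that $F_\varepsilon$ has \emph{exactly} $n_i$ attracting, $m_i$ repelling, and $l_i$ indifferent fixed points in $B_{r_i}(a_i)$; it only claims that $F_\varepsilon$ has (at least) that many, produced inside the pairwise disjoint disks $D_r(b_{i,j})$. So the entire ``no extra fixed points'' step is unnecessary for this corollary as stated. Moreover, the specific argument you sketch for it has a genuine gap: $\mathbb{C}_v$ is not locally compact, so the complement of a finite union of open disks inside $B_{r_i}(a_i)$ is not compact, and there is no reason for $\inf |f_i(z)-z|$ over that complement to be positive. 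Your parenthetical ``compact-in-spirit'' is exactly where the argument breaks. If you wanted an exact count you would need a different mechanism, e.g.\ comparing $\wdeg_{B_{r_i}(a_i)}(F_\varepsilon(z)-z)$ with $\wdeg_{B_{r_i}(a_i)}(f_i(z)-z)$ directly via the approximation; but again, none of this is needed here.
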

\begin{proof}
    Let $N_i=n_i+m_i+l_i$ and let $\{b_{i,1},\ldots,b_{i,N_i}\}$ be the fixed points of $f_i$ in $B_{r_i}(a_i)$, for each $i=1,\ldots,n$. Since $N_1+\cdots+N_n<\infty$, then there exists $r\in|\C_v^\times|$ such that
    \begin{itemize}
        \item $D_r(b_{i,j})$ contains only one fixed point of $f_i$, for all $i=1,\ldots,n$ and all $j=1,\ldots,N_i$,
        \item if $b_{i,j}$ is an attracting fixed point then $f(D_r(b_{i,j}))=D_{t_{i,j}}(b_{i,j})$ with $t_{i,j}<r$,
        \item if $b_{i,j}$ is a repelling fixed point then $f(D_r(b_{i,j}))=D_{t_{i,j}}(b_{i,j})$ with $t_{i,j}>r$ and $f:D_r(b_{i,j})\to D_{t_{i,j}}(b_{i,j})$ is bijective,
        \item if $b_{i,j}$ is an indifferent fixed point then $f(D_r(b_{i,j}))=D_{t_{i,j}}(b_{i,j})$ with $t_{i,j}=r$ and $f:D_r(b_{i,j})\to D_{t_{i,j}}(b_{i,j})$ is bijective.
    \end{itemize}
    Let $\delta_1,\ldots,\delta_n$ as in proof of Theorem \ref{T1}. Let us choose $\varepsilon>0$ such that $$\varepsilon<\min_{1\leq i\leq n}\{\delta_i,t_{i,1}\ldots,t_{i,N_i}\},$$and let us define $F_\varepsilon$ as in \eqref{F} in proof of Theorem \ref{T1}.\\
    By Theorem \ref{T1}, $|F_\varepsilon(x)-f_i(x)|<\varepsilon$ for all $x\in B_{r_i}(a_i)$, and all $i=1,\ldots,n$. In particular, by (3) in Remark \ref{Rem}, $F_\varepsilon(D_r(b_{i,j}))=D_{t_{i,j}}(b_{i,j})$ for all $i=1,\ldots,n$ and all $j=1,\ldots,N_i$.\\
    Following the ideas from the proofs of Corollaries \ref{C1}, \ref{C2} and \ref{C3}, we have that 
    \begin{itemize}
        \item if $b_{i,j}$ is an attracting fixed point of $f_i$ then $F_\varepsilon(D_r(b_{i,j}))\subsetneq D_r(b_{i,j})$, and hence by (a) in Proposition \ref{T418}, $F_\varepsilon$ has a unique attracting fixed point in $D_r(b_{i,j})$.
        \item if $b_{i,j}$ is a repelling fixed point of $f_i$ then $F_\varepsilon(D_r(b_{i,j}))\supsetneq D_r(b_{i,j})$ and $F_\varepsilon:D_r(b_{i,j})\to D_{t_{i,j}}(b_{i,j})$ is bijective. Therefore, by (c) in Proposition \ref{T418}, $F_\varepsilon$ has a unique repelling fixed point in $D_r(b_{i,j})$.
        \item If $b_{i,j}$ is an indifferent fixed point of $f_i$ then $F_\varepsilon(D_r(b_{i,j}))=D_r(b_{i,j})$ and $F_\varepsilon:D_r(b_{i,j})\to D_{t_{i,j}}(b_{i,j})$ is bijective. Moreover, if the hypothesis of Corollary \ref{C3} hold for $b_{i,j}$, by Corollary \ref{C3}, $F_\varepsilon$ has a indifferent fixed point in $B_{r_i}(a_i)$. 
    \end{itemize}
    Therefore, $F_\varepsilon$ contains $n_i$ attracting, $m_i$ repelling and $l_i$ indifferent fixed point in $B_{r_i}(a_i)$ for each $i=1,\ldots,n$. 
\end{proof}

\section{Examples}
\label{sec:examples}
This section presents examples to elucidate the assertions made in Theorem~\ref{T1}.

\begin{Example}
In $\mathbb{C}_3$. Let us consider the rational functions 
\begin{align*}
    f_1:B_{r_1}(0)&\to B_{t_1}(0)&\hspace{1cm}\text{and}\hspace{1cm}f_2:B_{r_2}(3)&\to B_{t_2}(3)\\
     z&\mapsto\alpha z & z&\mapsto\beta z(z-3)+z
\end{align*}
with $r_1,r_2<1/3$. Note that $0$ is a fixed point of $f_1$ with multiplier $\alpha$, and $3$ is a fixed point of $f_2$ with multiplier $3\beta+1$.

Following the proof of Theorem \ref{T1}, note that $\delta_1=\delta_2=\frac{1}{3}$, $s_i=\sqrt{\frac{r_i}{3}}$, for $i=1,2$. Then we choose $c_1,c_2\in\C$ with $|c_i|=s_i$, for $i=1,2$.

Therefore, the rational functions $h_1$ and $h_2$ are defined as $$h_1(z)=\frac{1}{1-\left(\frac{z}{c_1}\right)^{M_1}}\hspace{1cm}\text{and}\hspace{1cm}h_2(z)=\frac{1}{1-\left(\frac{z-3}{c_2}\right)^{M_2}},$$for some $M_1,M_2\geq1$, and then their na-surgery is: 
$$F_\varepsilon(z)=f_1(z)h_1(z)+f_2(z)h_2(z)=\frac{\alpha z}{1-\left(\frac{z}{c_1}\right)^{M_1}}+\frac{\beta z(z-3)+z}{1-\left(\frac{z-3}{c_2}\right)^{M_2}}.$$

Note that $0$ is a fixed point of $F_\varepsilon$, moreover, since
\begin{itemize}
    \item $f_1(0)=f_2(0)=0$,
    \item $f_1'(0)=\alpha$, $f_2'(0)=1-3\beta$,
    \item $h_1(0)=1$, $h_2(0)=\tfrac{1}{1-\left(\tfrac{-3}{c_2}\right)^{M_2}}$
    \item and since $|c_2|<\tfrac{1}{3}$, then
    $\left|\tfrac{-3}{c_2}\right|>1$
\end{itemize}
then: 
\begin{align*}
F_\varepsilon'(0)&=f_1'(0)h_1(0)+f_1(0)h_1'(0)+f_2'(0)h_2(0)+f_2(0)h_2'(0)\\
        &=f_1'(0)+f_2'(0)h_2(0)\\
        &=\alpha+\tfrac{1-3\beta}{1-\left(\tfrac{-3}{c_2}\right)^{M_2}}.
\end{align*}
If $1-3\beta=0$, then $F_\varepsilon(0)=\alpha$. If $1-3\beta\neq0$, we choose $0<\varepsilon<\frac{1}{|1-3\beta|}$, then  $\left(\frac{r_2}{\delta_2}\right)^{M_2/2}<\ \varepsilon$, and hence:
$$\left|\frac{1-3\beta}{1-\left(\frac{-3}{c_2}\right)^{M_2}}\right|=\frac{|1-3\beta|}{\left|\frac{-3}{c_2}\right|^{M_2}}=|1-3\beta|\left(\frac{r_2}{\delta_2}\right)^{M_2/2}<1.$$
Therefore, if $0$ is an attracting fixed point of $f_1$ then $|\alpha|<1$, which implies that $|F_\varepsilon'(0)|<1$ and hence $0$ is an attracting fixed point of $F_\varepsilon$. Analogously, if $0$ is an indifferent (resp. repelling) fixed point of $f_1$, then $0$ is an indifferent (resp. repelling) fixed point of $F_\varepsilon$.  
\end{Example}

\begin{Example}
In $\C_3$. Let us assume that we do not know the rational functions $f_1,\ldots,f_n$; instead, we only know some about the dynamics over the balls. For example let us assume that certain dynamics implies that $$B_{\frac{1}{3^2}}(0)\to B_{\frac{1}{3^3}}(0),\quad B_{\frac{1}{3^2}}(3)\to B_{\frac{1}{3}}(3)\quad\text{ and }\quad B_{\frac{1}{3^2}}(6)\to B_{\frac{1}{3^2}}(6).$$
Then, our goal is to find a rational function, $F\in\C_3(z)$, such that $$F(B_{\frac{1}{3^2}}(0))=B_{\frac{1}{3^3}}(0),\quad F(B_{\frac{1}{3^2}}(3))=B_{\frac{1}{3}}(3),\quad F(B_{\frac{1}{3^2}}(6))=B_{\frac{1}{3^2}}(6),$$this rational function will be an approximation of the original dynamics.\\
Since we do not know the functions $f_1,f_2$, and $f_3$, we can use simple rational functions that respect the dynamics. For example, $$f_1(z)=3z,\quad f_2(z)=\frac{z+6}{3}\quad\text{ and }\quad f_3(z)=z.$$\\
Note that
\begin{itemize}
    \item 
    \begin{itemize}
        \item $f_1(B_{\frac{1}{3^2}}(0))=B_{\frac{1}{3^3}}(0)$,
        \item $f_2(B_{\frac{1}{3^2}}(3))=B_{\frac{1}{3}}(3)$ and
        \item $f_3(B_{\frac{1}{3^2}}(6))=B_{\frac{1}{3^2}}(6)$
    \end{itemize}
    \item Following the proof of Theorem \ref{T1}, $r_1=r_2=r_3=\frac{1}{3^2}$, $\delta_1=\delta_2=\delta_3=\frac{1}{3}$, and hence $$s_i=\sqrt{r_i\delta_i}=\frac{1}{3\sqrt{3}},$$for $i=1,2,3$. Therefore, we can choose $c=c_1=c_2=c_3\in\C_3$ such that $|c|=\frac{1}{3\sqrt{3}}$. 
    \item Following the proof of Theorem \ref{T1}, $t_1=\frac{1}{3^3}$, $t_2=\frac{1}{3}$ and $t_3=\frac{1}{3^2}$, therefore, for $\varepsilon\geq\frac{1}{3^3}$ we have that $\tau=\frac{1}{3^3}$.
    \item Since $\frac{r_i}{\delta_i}=\frac{1/3^2}{1/3}=\frac{1}{3}$, then $$\left(\frac{r_i}{\delta_i}\right)^{M_i/2}<\tau\iff\frac{1}{3^{M_i/2}}<\frac{1}{3^3}\iff M_i>6,$$ therefore, we can choose $M_i=7$ for $i=1,2,3$.
\end{itemize}
Hence, let us define $$h_1(z)=\frac{1}{1-\left(\frac{z}{c}\right)^7},\quad h_2(z)=\frac{1}{1-\left(\frac{z-3}{c}\right)^7},\quad h_3(z)=\frac{1}{1-\left(\frac{z-6}{c}\right)^7},$$and $$F(z)=\frac{3z}{1-\left(\frac{z-3}{c}\right)^7}+\frac{z+6}{3\left(1-\left(\frac{z-6}{c}\right)^7\right)}+\frac{z}{1-\left(\frac{z-3}{c}\right)^7}.$$
By Theorem \ref{T1} the rational function $F$ satisfies $$F(B_{\frac{1}{3^2}}(0))=B_{\frac{1}{3^3}}(0),\quad F(B_{\frac{1}{3^2}}(3))=B_{\frac{1}{3}}(3),\quad F(B_{\frac{1}{3^2}}(6))=B_{\frac{1}{3^2}}(6).$$
\end{Example}

In this example, we see that our method solves, up to an approximation, the inverse problem given a global rational function that describes the desired dynamics in the given balls.

%Cambiar 
\bibliographystyle{alpha}
\renewcommand{\bibname}{References} 
 \bibliography{references} 
\end{document}